\newcommand{\norm}[1]{\left\lVert#1\right\rVert}
\theoremstyle{plain}
\newtheorem{theorem}{Theorem}[section]
\newtheorem{lemma}[theorem]{Lemma}
\newtheorem{proposition}[theorem]{Proposition}
\theoremstyle{remark}
\newtheorem{Remark}[theorem]{Remark}
\newcommand{\Hmm}[1]{\leavevmode{\marginpar{\tiny%
$\hbox to 0mm{\hspace*{-0.5mm}$\leftarrow$\hss}%
\vcenter{\vrule depth 0.1mm height 0.1mm width \the\marginparwidth}%
\hbox to 0mm{\hss$\rightarrow$\hspace*{-0.5mm}}$\\\relax\raggedright #1}}}
\newcommand\reallywidehat[1]{%
\savestack{\tmpbox}{\stretchto{%
  \scaleto{%
    \scalerel*[\widthof{\ensuremath{#1}}]{\kern-.6pt\bigwedge\kern-.6pt}%
    {\rule[-\textheight/2]{1ex}{\textheight}}
  }{\textheight}%
}{0.5ex}}%
\stackon[1pt]{#1}{\tmpbox}%
}
\numberwithin{equation}{section}
\begin{document}

\keywords{Fourth-order Scrödinger equation, local well-posedness, global well-posedness, almost conservation law, Fourier restriction spaces}

\author[Ba\c{s}ako\u{g}lu, Ye\c{s}ilo\u{g}lu, Yılmaz] {Engin Ba\c{s}ako\u{g}lu, Barı\c{s} Ye\c{s}ilo\u{g}lu, O\u{g}uz Yılmaz}

\address{School of Mathematics, University of Birmingham, Watson Building, Edgbaston, Birmingham B15 2TT, United Kingdom}
\email{e.basakoglu@bham.ac.uk}
\address{Department of Mathematics,
Bo\u gazi\c ci University, 
Bebek 34342, Istanbul, Turkey}
\email{baris.yesiloglu@boun.edu.tr}
\address{Department of Mathematics, Bo\u gazi\c ci University, 
Bebek 34342, Istanbul, Turkey}
\email{oguz.yilmaz@boun.edu.tr}

\begin{abstract}
We study the global well-posedness of the two-dimensional defocusing fourth-order Schrödinger initial value problem with power type nonlinearities $\vert u\vert^{2k}u$ where $k$ is a positive integer. By using the $I$-method, we prove that global well-posedness is satisfied in the Sobolev spaces $H^{s}(\mathbb{R}^{2})$ for $2-\frac{3}{4k}<s<2$.
\end{abstract}

 \title[Global Solutions of 4-NLS Equations below the Energy Space]{Global Well-posedness for The Fourth-order Nonlinear Schr\"{o}dinger Equations on $\mathbb{R}^{2}$}
\maketitle

\section{Introduction}
In this paper, we consider the initial value problem (IVP) for the defocusing fourth-order Schrödinger equation (4-NLS) with the power type nonlinearity
\begin{equation}\label{4NLS}
    \begin{cases}
        i\partial_{t}u+\Delta^{2}u-\Delta u+\vert u\vert^{2k}u=0,\quad (t,x)\in\mathbb{R}\times\mathbb{R}^{2},\\
        u(0,x)=u_{0}(x)\in H^{s}(\mathbb{R}^{2})
    \end{cases}
\end{equation}
where $0<s<2$, $k$ is a positive integer and $u$ is a complex-valued space-time function. The fourth-order nonlinear Schrödinger equations 
\begin{equation*}\label{eq:karpman_4NLS}
    i\partial_{t}\psi+\frac{1}{2}\triangle\psi+\frac{\gamma}{2}\triangle^{2}\psi+f(\vert\psi\vert^{2})\psi=0,\quad f(u)=u^{p},\,p\geq 1,\,\gamma\in\mathbb{R}
\end{equation*}
were introduced by Karpman and Shagalov, \cite{karpman1996stabilization,karpman2000stability}, regarding the effect of small fourth-order dispersion terms in the propagation of intense laser beams in a bulk medium with Kerr nonlinearity.

The local solutions of \eqref{4NLS} have the following mass and energy conservation laws:
\begin{equation}\label{eq:mass_cons}
    M(u(t)):=\int_{\mathbb{R}^{2}}\vert u(t,x)\vert^{2} \text{d}x=M(u_{0}),
\end{equation}
\begin{equation}\label{eq:enrgy_cons}
    E(u(t)):=\frac{1}{2}\int_{\mathbb{R}^{2}}\vert\Delta u(t,x)\vert^{2}+\vert\nabla u(t,x)\vert^{2}+\frac{1}{k+1}\vert u(t,x)\vert^{2k+2}\,\text{d}x=E(u_{0}).
\end{equation}
Note that the IVP \eqref{4NLS} does not possess a scaling symmetry (unlike the 4-NLS without Laplacian), this removes the need to consider the $H^{s}_{x}$-criticality concerns. For instance, if the mass supercriticality for \eqref{4NLS} could be possible, then the mass conservation of solutions would not hold at all.

In order to discuss some of the respective literature briefly, we shall state the general form of the 4-NLS equations as follows 
\begin{equation}\label{eq:gen_form_4NLS}
    i\partial_{t}u+\alpha\Delta^{2}u+\beta\Delta u+\mathcal{N}(u)=0
\end{equation}
where $\alpha,\beta\in\mathbb{R}$, and $\mathcal{N}(u)$ is the nonlinear perturbation. 

The local/global well-posedness as well as the scattering of \eqref{eq:gen_form_4NLS} have been extensively studied. The cubic defocusing biharmonic NLS equation (\eqref{eq:gen_form_4NLS} with $\alpha=1$, $\beta=0$ and $\mathcal{N}(u)=\vert u\vert^{2}u$) in arbitrary space dimensions was investigated by Pausader in \cite{pausader2009cubic}. He established that for $1\leq n\leq 8$, the IVP is globally well-posed for arbitrary $H^{2}(\mathbb{R}^{n})$ initial data, while the equation is ill-posed in the energy space for $n\geq 9$ by showing for any $\varepsilon>0$ the existence of a Schwartz class data $u_{0}$ and a finite time $t_{\varepsilon}\in (0,\varepsilon)$ such that even though $\Vert u_{0}\Vert_{H^{2}_{x}}<\varepsilon$ one has $\Vert u(t_{\varepsilon})\Vert_{H^{2}_{x}}>\varepsilon^{-1}$. He also proved that the scattering in $H^{2}(\mathbb{R}^{n})$ holds true when $5\leq n\leq 8$ and the scattering operator is analytic. In \cite{pausader_mass_crit_2010}, Pausader and Shao addressed the equation \eqref{eq:gen_form_4NLS} with $\alpha=1$, $\beta=0$ and the mass-critical nonlinearity $\mathcal{N}(u)=\pm\vert u\vert^{\frac{8}{n}}u$. They established that for $n\geq 5$ the defocusing equation is globally well-posed and scatters in time for arbitrary $L^{2}$ initial data. Moreover, under the additional assumption that the mass of the initial data is strictly less than the mass of the ground state solution, they showed that the focusing equation is also globally well-posed and scatters in time for radially symmetric $L^{2}$-initial data. The mass supercritical equation \eqref{eq:gen_form_4NLS} with $\alpha=1$, $\beta=0,1$ and nonlinearities $\mathcal{N}(u)=\vert u\vert^{p-1}u$, $1+\frac{8}{n}<p<\infty$ for $1\leq n\leq 4$ was studied by Pausader and Xia in \cite{pausader2013scattering}, who proved that the equation scatters in time in the energy space $H^{2}$ by utilizing a new virial-type estimate. For additional results of global well-posedness and scattering in time below the energy space for the biharmonic NLS, see for instance \cite{dinh2017global}, \cite{seong2021well}. As regards to more general 4-NLS, Guo and Cui \cite{GUO2006706} considered the IVP for \eqref{eq:gen_form_4NLS} with $\alpha\neq 0$, $\beta\in\mathbb{R}$, and nonlinearity $\mathcal{N}(u)=f(\vert u\vert^{2})u$, for a real-valued nonlinear function $f$, in dimensions $n=1,2,3$. With the smoothness condition $f\in C^{k+1}(\mathbb{R}_{+};\mathbb{R})$ for $k\geq 2$ being an integer, they established that for any $u
_{0}\in H^{k}(\mathbb{R}^{n})$ initial data, the equation has a unique global solution $u$ in
$C(\mathbb{R};H^{k}(\mathbb{R}^{n}))\cap C^{1}(\mathbb{R};H^{k-2}(\mathbb{R}^{n})).$ Concerning the equation \eqref{eq:gen_form_4NLS} with $\alpha=1$, $\beta=-1,0,1$, by using the $[k,Z]$ multiplier method, Zheng \cite{Zheng_2011} obtained the $H^{s}(\mathbb{R})$ local well-posedness for $s\in(-\frac{7}{4},0]$ if $\mathcal{N}(u)=u^2, \overline{u}^{2}$; and for $s\in(-\frac{3}{4},0]$ if $\mathcal{N}(u)=|u|^2$. The well-posedness of \eqref{eq:gen_form_4NLS} in modulation spaces were studied in \cite{HAO200758}, \cite{RUZHANSKY201631}. In \cite{Liu_2021}, Liu and Zhang established the local well-posedness of the IVP \eqref{eq:gen_form_4NLS} in $H^{s}(\mathbb{R}^{n})$ for $n\geq 1$, with $\alpha=1$, $\beta=\pm 1$ or $0$ and $\mathcal{N}(u)=\lambda\vert u\vert^{p}u$ where $\lambda\in\mathbb{C}$ and $0<p<\frac{8}{n-2s}$. With this result, they have extended the pre-established local well-posedness results in the energy space to the full range of energy subcritical powers.

In this paper, we aim to study the global well-posedness for the IVP \eqref{4NLS} below the energy space. Our result reads as follows:
\begin{theorem}\label{eq:main_thm}
    The initial value problem \eqref{4NLS} is globally well-posed for the initial data $u_{0}\in H^{s}(\mathbb{R}^{2})$ for $s>2-\frac{3}{4k}$.
\end{theorem}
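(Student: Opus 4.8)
The plan is to prove Theorem \ref{eq:main_thm} by the $I$-method (the method of almost conserved quantities of Colliander--Keel--Staffilani--Takaoka--Tao), adapted to the fourth-order propagator $e^{it(\Delta^2-\Delta)}$, whose phase is $\phi(\xi)=|\xi|^4+|\xi|^2$. First I would set up the Fourier restriction (Bourgain) spaces $X^{s,b}$ attached to this phase and record the Strichartz estimates for the fourth-order group; the gain of two derivatives in the leading dispersion $|\xi|^4$ makes these estimates stronger than their Schr\"odinger analogues, and this is ultimately what governs the admissible range of $s$. Using them I would establish the $(2k+1)$-linear estimate controlling the nonlinearity $|u|^{2k}u=u^{k+1}\bar u^{k}$ in $X^{s,b}$, and deduce a local well-posedness statement in the form adapted to the method: writing $I=I_N$ for the smoothing multiplier with symbol $m(\xi)$ equal to $1$ on $\{|\xi|\le N\}$ and to $(N/|\xi|)^{2-s}$ on $\{|\xi|\ge 2N\}$, there is a local existence time $\delta$ depending only on $\Vert Iu_0\Vert_{H^2}$ on which the solution obeys $\Vert Iu\Vert_{X^{2,b}_\delta}\lesssim \Vert Iu_0\Vert_{H^2}$.

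The core of the argument is an almost conservation law for the modified energy $E(Iu)$. Applying $I$ to \eqref{4NLS}, the function $Iu$ solves $i\partial_t Iu+\Delta^2 Iu-\Delta Iu+I(|u|^{2k}u)=0$, and differentiating $E(Iu)$ in time produces, after the standard manipulation, a term measuring the failure of $I$ to commute with the nonlinearity, namely the commutator $I(|u|^{2k}u)-|Iu|^{2k}Iu$. I would write the increment $E(Iu(\delta))-E(Iu(0))$ as a $(2k+2)$-linear form on the Fourier side (with higher-order contributions treated analogously), in which the multiplier carries a factor of the schematic form $1-m(\xi_1)\cdots m(\xi_{2k+1})/m(\xi_1+\cdots+\xi_{2k+1})$ that vanishes when all frequencies are $\lesssim N$. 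The target is the bound
\[
\sup_{t\in[0,\delta]}\bigl|E(Iu(t))-E(Iu(0))\bigr|\lesssim N^{-\beta}\,\Vert Iu\Vert_{X^{2,b}_\delta}^{2k+2}
\]
for some $\beta=\beta(k)>0$: the gain $N^{-\beta}$ is extracted from a pointwise/mean-value estimate on the multiplier factor through a case analysis according to which frequencies are $\gtrsim N$, and the remaining multilinear form is then controlled by the Strichartz and $X^{s,b}$ machinery of the first step.

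Finally I would iterate. Since \eqref{4NLS} has no scaling symmetry, rather than normalizing the energy to size $O(1)$ I would track its genuine size: the defocusing sign makes $E(Iu)$ coercive, giving $\Vert Iu(t)\Vert_{H^2}^2\lesssim E(Iu(t))+\Vert u_0\Vert_{L^2}^2$, and combined with mass conservation \eqref{eq:mass_cons} and the elementary bound $\Vert u\Vert_{H^s}\lesssim\Vert Iu\Vert_{H^2}$ this reduces the global bound to controlling the growth of $E(Iu)$. On each local step the local theory gives $\Vert Iu\Vert_{X^{2,b}_\delta}\lesssim E(Iu)^{1/2}$, so the almost conservation law bounds the increment per step; summing over the $\sim T/\delta$ steps needed to reach a prescribed time $T$ keeps $E(Iu)$ under control provided $N$ is chosen large depending on $T$. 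Balancing the cost $\Vert Iu_0\Vert_{H^2}\lesssim N^{2-s}\Vert u_0\Vert_{H^s}$ of the $I$-operator against the gain $N^{-\beta}$ and the number of iterations is what singles out the threshold $s>2-\frac{3}{4k}$: for $s$ in this range one can take $N=N(T)$ so that the iteration closes for every $T$, which yields the global bound.

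The step I expect to be the main obstacle is the almost conservation law, specifically securing a decay exponent $\beta$ large enough to overcome the $(2k+2)$-fold multilinearity. The difficulty is twofold: the commutator multiplier must be dissected by a careful frequency case analysis to harvest the full power of $N^{-\beta}$, and the accompanying $(2k+2)$-linear form, whose number of factors grows with $k$, must be estimated using the fourth-order Strichartz bounds, so that the competition between the fixed dispersion and the increasing order of the nonlinearity produces exactly the $k$-dependent threshold in the statement.
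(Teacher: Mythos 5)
Your proposal follows essentially the same route as the paper: the $I$-method in $X^{s,b}$ spaces adapted to the phase $\vert\xi\vert^{4}+\vert\xi\vert^{2}$, a modified local existence theorem by contraction in $X^{2,\frac12+}_{\delta}$, an almost conservation law for $E(Iu)$ obtained by pointwise bounds on the commutator multiplier in a frequency case analysis powered by the derivative-gaining Strichartz estimates, and an iteration that tracks the genuine size of the modified energy in the absence of scaling. The only ingredients you leave unquantified are exactly those the paper computes — the decay $N^{-3+}$ (together with the higher-order $N^{-4+}\Vert Iu\Vert^{4k+2}_{X^{2,\frac12+}_{\delta}}$ contribution from the $I(\vert u\vert^{2k}u)$ factor, which you correctly anticipate as ``higher-order contributions'') and the local time $\delta^{1-}\sim\Vert Iu_{0}\Vert_{H^{2}_{x}}^{-2k}$ — whose balance against $E(Iu(0))\lesssim N^{2(2-s)}$ is what produces the threshold $s>2-\frac{3}{4k}$.
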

\begin{Remark}
In \cite{GUO2010}, Guo established a global well-posedness result in $H^{s}(\mathbb{R}^{n})$ for the equation \eqref{eq:gen_form_4NLS} with $\mathcal{N}(u)=\nu\vert u\vert^{2m}u$ for $s>1 + \frac{mn - 9 + \sqrt{(4m-mn+7)^2 +16}}{4m}$, $m\in\mathbb{N}$ such that $4<mn<4m+2$ (with either $\alpha>0$, $\beta<0$, $\nu>0$ or $\alpha<0$, $\beta>0$, $\nu<0$; hence the resulting equations are defocusing in either case). In this paper, we improve the result in \cite{GUO2010} for $n=2$ and $k>2$ integer, besides unlike \cite{GUO2010} our result covers the cubic and quintic 4-NLS equations. Note that, in view of the absence of scaling symmetry, the exponent of the upper bound of the local existence time is of utmost importance in obtaining better results. This exponent is given as $-\frac{4}{s-s_{c}}$ with $s_{c}=\frac{n}{2}-\frac{2}{m}$ in Lemma $2.3$ in \cite{GUO2010}, whereas the respective exponent in our discussion is given in Lemma \ref{mod_local_exst}. In order to obtain the result, Guo utilizes the Strichartz spaces $L^{q}_{t}L^{r}_{x}$ with the biharmonic admissible pair $(q,r)$ \eqref{eq:bih_admissible}; essentially, the argument in \cite{GUO2010} relies heavily on the $I$-method implemented on such spaces. Our improvement relies on the use of $X^{s,b}$ spaces together with the derivative gain (see \eqref{keyestimate}) due to the strong dispersive nature of 4-NLS, which was first exploited by Ben Artzi, Koch, and Saut \cite{BENARTZI200087}. Indeed, the same argument could be used to improve the result in \cite{GUO2010} in other dimensions as well.    
\end{Remark}
\begin{Remark}
    The identity \eqref{eq:exponent_of_time_param} in Lemma \ref{mod_local_exst} will play a crucial role in estimating the difference $E(Iu(\delta))-E(Iu(0))$ in the lack of scale invariance. In particular, the bigger the exponent of $\delta$ in the estimate \eqref{eq:exponent_of_time_param} we have, the better the range of global well-posedness one can establish (see \eqref{eq:T_delta_N_relation} below). In a recent paper \cite{başakoğlu2023global}, G\"urel, the first and the third authors have established global well-posedness of the quintic biharmonic NLS in $H^s(\mathbb{R}^2)$ for $\frac{8}{7}<s<2$ by utilizing the scale invariance property of the biharmonic equation. Compared to the result of the quintic 4-NLS ($k=2$ case) in this paper, the result of the paper \cite{başakoğlu2023global} is stronger, which is the indication of how scaling property of a particular equation yields better range of global well-posedness.  Besides the failure of a scaling invariance, another failure of the 4-NLS equation \eqref{4NLS} is the lack of Galilean symmetry which causes a failure of having a reference frame that the momentum of the solution $u$ is $0$ (which is called normalization of $u$) see \cite{pausader_mass_crit_2010}, \cite{pausader2013scattering}. Unlike the fourth order nonlinear Schrödinger equations, nonlinear Schrödinger equations satisfy both scaling and Galilean symmetries. For instance, in \cite{dodson_2d_nls}, Dodson exploited both scaling and Galilean symmetries of the mass-critical NLS to prove the global well-posedness and scattering in time for $L^2(\mathbb{R}^2)$ initial data by means of a concentration compactness argument, see theorems 1.3, 1.5 and 1.6 in \cite{dodson_2d_nls}. As for the biharmonic equation, in \cite{pausader_mass_crit_2010}, Pausader and Shao utilized a similar approach as Dodson used in \cite{dodson_2d_nls} (not the Galilean symmetry, which is not available for biharmonic NLS, but the symmetry group generated by temporal-spatial translation and scaling of the biharmonic equation for concentration compactness approach to be applied) to establish the global existence and scattering of defocusing and focusing mass-critical biharmonic equation with $L^{2}(\mathbb{R}^d)$ data, for $d\geq 5$. Also in the lower dimensions, as mentioned above, Pausader and Xia \cite{pausader2013scattering} utilized a profile decomposition and concentration compactness approach together with a new virial-type inequality to prove scattering in the energy space $H^2$. To establish a better global existence result for \eqref{4NLS} below the energy space though, one needs to take advantage of a more refined approach that works without the use of symmetries discussed above and this will be taken into consideration in a further study. 
\end{Remark}
Now we outline the organization of the paper. In Section 2, we present notations and the necessary background, including a priori estimates. In Section 3, we prove an almost conservation law and a local existence result for the modified equation \eqref{eq:mod_4NLS}. Lastly, in Section 4 we give a proof of Theorem \ref{eq:main_thm} by using the results obtained in Section 3.
\section{Background and Notation}
We write $X\lesssim Y$ if there exists an absolute constant $C>0$ such that $X\leq CY$, also define $X\sim Y$ when $X\lesssim Y$ and $Y\lesssim X$. We use the notation $X\ll Y$ meaning that $X\leq CY$ where $C$ is a sufficiently large constant. The Fourier transform is defined as
\begin{equation*}
    \widehat{f}(\xi)=\int_{\mathbb{R}^{d}}e^{-ix\cdot\xi}f(x)\,\text{d}x,
\end{equation*}
 similarly the space-time Fourier transform is given as \begin{equation*}\widetilde{f}(\tau,\xi)=\int_{\mathbb{R}^{d+1}}e^{-ix\cdot\xi-it\tau}f(t,x)\,\text{d}x\text{d}t.\end{equation*}For $s\in\mathbb{R}$, the Sobolev space $H^s(\mathbb{R}^d)$ is defined via the norm \begin{equation*}
  \Vert f\Vert_{{H}^{s}(\mathbb{R}^{d})}=\Vert\langle\xi\rangle^{s}\widehat{f}(\xi)\Vert_{L^{2}_{\xi}(\mathbb{R}^{d})}
 \end{equation*}
 where $\langle \cdot\rangle=\sqrt{1+|\cdot|^2}$. Also the homogeneous Sobolev space $\dot{H}^s(\mathbb{R}^d)$ is defined similarly by using $|\cdot|$ rather than $\langle \cdot\rangle$. The Fourier restriction space $X^{s,b}$ associated with the equation \eqref{4NLS} is defined to be the closure of the Schwartz functions space $\mathcal{S}_{t,x}(\mathbb{R}\times \mathbb{R}^d)$ under the norm
\begin{equation}\label{eq:X_sb}
    \Vert u\Vert_{X^{s,b}(\mathbb{R}\times \mathbb{R}^d)}=\Vert\langle\xi\rangle^{s}\langle\tau-\vert\xi\vert^{4}-\vert\xi\vert^{2}\rangle^{b}\widetilde{u}(\tau,\xi)\Vert_{L^{2}_{\tau,\xi}(\mathbb{R}\times\mathbb{R}^d)},
\end{equation}
and we also define the restricted $X^{s,b}$ space, $X^{s,b}_{\delta}$, as the equivalence classes of functions that agree on $[0,\delta]$ with the norm 
\begin{equation}\label{eq:trunc-X_sb}
    \Vert u\Vert_{X^{s,b}_{\delta}}=\inf_{v=u\text{ on }[0,\delta]}\Vert v\Vert_{X^{s,b}}.
\end{equation}
Let $\varphi$ be a real-valued, smooth, compactly supported, radial function such that $\text{supp}\,\varphi\subseteq\{\xi\in\mathbb{R}^{n}:\vert\xi\vert\leq 2\}$ and $\varphi\equiv 1$ on the closed unit ball. For a dyadic number $N$, the Littlewood-Paley projection operators are defined by
\begin{align}
    \widehat{P_{N}u}(\xi)&=(\varphi(\xi/N)-\varphi(2\xi/N))\widehat{u}(\xi)\label{eq:L-P_opt}\\ \nonumber\widehat{P_{\leq N}u}(\xi)&=\sum_{M\leq N}\widehat{P_{M}u}(\xi)=\varphi(\xi/N)\widehat{u}(\xi),\\\nonumber \widehat{P_{>N}u}(\xi)&=\sum_{M>N}\widehat{P_{M}u}(\xi)=(1-\varphi(\xi/N))\widehat{u}(\xi)
\end{align}
where $M$ ranges over dyadic numbers. In our discussion the following Bernstein's estimates are useful: for $s\geq 0$ and $1\leq p\leq\infty$,
\begin{align}
    \Vert P_{N}u\Vert_{L^{p}_{x}(\mathbb{R}^{n})}\sim_{p,n,s}N^{\pm s}\Vert\vert\nabla\vert^{\mp s}P_{N}u\Vert_{L^{p}_{x}
    (\mathbb{R}^{n})}\label{eq:Berns_ineq}\\
    \norm{P_{\geq N}f}_{L_x^{p}(\mathbb{R}^n)}\lesssim_{p,n,s} N^{-s}\norm{|\nabla|^sP_{\geq N}f}_{L_x^{p}(\mathbb{R}^n)}.\label{eq:Berns_ineq2}
\end{align}
The Sobolev spaces can be characterized by using the Littlewood–Paley projections as follows
\begin{equation*}
    \Vert u\Vert_{H^{s}_{x}(\mathbb{R}^{n})}\sim_{s,n}\Vert P_{\leq 1}u\Vert_{L^{2}_{x}(\mathbb{R}^{n})}+\Big(\sum_{N>1}N^{2s}\Vert P_{N}u\Vert^2_{L^{2}_{x}(\mathbb{R}^{n})}\Big)^{1/2}
\end{equation*}
and
\begin{equation*}
    \Vert u\Vert_{\Dot{H}^{s}_{x}(\mathbb{R}^{n})}\sim_{s,n}\Big(\sum_{N}N^{2s}\Vert P_{N}u\Vert^2_{L^{2}_{x}(\mathbb{R}^{n})}\Big)^{1/2}.
\end{equation*} We define the $L^{q}_{t}L^{p}_x$ norm by
\begin{equation*}\label{eq:LpLqnorm}
    \Vert f\Vert_{L^{q}_{t}L^{p}_{x}(\mathbb{R}\times\mathbb{R}^{d})}=\left(\int_{\mathbb{R}}\left(\int_{\mathbb{R}^{d}}\vert f(t,x)\vert^{p}\,\text{d}x\right)^{q/p}\text{d}t\right)^{1/q}
\end{equation*}
and the pair $(p,q)$ is said to be biharmonic admissible if 
\begin{equation} \label{eq:bih_admissible}
    \frac{4}{q}=d\Big(\frac{1}{2}-\frac{1}{p}\Big)\,\,\text{and}\,\, \begin{cases}
2\leq p < \frac{2d}{d-4} \hspace{0.5cm}\text{if}\,\,d\geq 4\\ 2\leq p \leq \infty \hspace{0.75cm}\text{if}\,\,d<4.
\end{cases}
\end{equation}
Note that $(p,q,d)\neq (\infty,2,4)$.
Below we state a very useful refinement of the Strichartz estimates which allows us to gain some order of derivative. For more general estimates of this type for a class of dispersive equations, see \cite{cui2007well}.
\begin{lemma}\label{dergain}
Assume $0\leq\mu\leq 1$, $\frac{2}{1-\mu}\leq p\leq\infty$, $2\leq q\leq\infty$, and
\begin{equation}\label{eq:mu_admiss_pairs}
    \frac{4}{q}=2\Big(\frac{1}{2}-\frac{1}{p}\Big)+\mu.
\end{equation}
Then for any $T_{0}>0$, there is a constant $C>0$ depending on $p,q,T_{0},\mu$ such that for any $0<T<T_{0}$ and any $f\in L^{2}(\mathbb{R}^{2})$, we have
\begin{align*}
    \Vert e^{it(\Delta^{2}-\Delta)}\vert\nabla\vert^{\mu}f\Vert_{L^{q}_{t}L^{p}_{x}([0,T]\times\mathbb{R}^{2})}&\leq C\Vert f\Vert_{L^{2}_{x}(\mathbb{R}^{2})}\\
    \Vert e^{it(\Delta^{2}-\Delta)}\langle\nabla\rangle^{\mu}f\Vert_{L^{q}_{t}L^{p}_{x}([0,T]\times\mathbb{R}^{2})}&\leq C\Vert f\Vert_{L^{2}_{x}(\mathbb{R}^{2})}.
\end{align*}
\end{lemma}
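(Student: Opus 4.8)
The plan is to establish the homogeneous estimate for $\vert\nabla\vert^\mu$ by a frequency-localized $TT^\ast$ argument, isolating the dispersive gain on the high frequencies and treating the low frequencies trivially. Write $U(t)=e^{it(\Delta^2-\Delta)}$ and decompose $f=\sum_N P_Nf$. Since $p,q\ge 2$, the Littlewood--Paley square-function estimate followed by Minkowski's inequality in $L^{p/2}_x$ and then in $L^{q/2}_t$ gives
\[
\Vert U(t)\vert\nabla\vert^\mu f\Vert_{L^q_tL^p_x}\lesssim\Big(\sum_N\Vert U(t)\vert\nabla\vert^\mu P_Nf\Vert_{L^q_tL^p_x}^2\Big)^{1/2},
\]
so, using $\vert\nabla\vert^\mu P_N\sim N^\mu P_N$, it suffices to prove $\Vert U(t)P_Nf\Vert_{L^q_tL^p_x}\lesssim N^{-\mu}\Vert P_Nf\Vert_{L^2_x}$ for $N\gtrsim 1$ together with a bound for $N\lesssim 1$ that stays square-summable after multiplication by $N^\mu$; Plancherel then sums the blocks to $\Vert f\Vert_{L^2_x}$.

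For the low frequencies $N\lesssim 1$ no gain is needed: Bernstein's inequality \eqref{eq:Berns_ineq}, the $L^2_x$-unitarity of $U(t)$, and Hölder's inequality in time over $[0,T]\subset[0,T_0]$ give
\[
\Vert U(t)\vert\nabla\vert^\mu P_Nf\Vert_{L^q_tL^p_x}\lesssim T_0^{1/q}\,N^{\mu+2(\frac12-\frac1p)}\Vert P_Nf\Vert_{L^2_x}=T_0^{1/q}\,N^{4/q}\Vert P_Nf\Vert_{L^2_x},
\]
where the final identity is exactly the admissibility relation \eqref{eq:mu_admiss_pairs}. As $N^{4/q}\le 1$ for $N\lesssim 1$, squaring and summing these pieces is controlled by $\Vert f\Vert_{L^2_x}$.

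The crux is the high-frequency regime, where the strong fourth-order dispersion yields the derivative gain. For $N\gtrsim 1$ the phase $\phi(\xi)=x\cdot\xi+t(\vert\xi\vert^4+\vert\xi\vert^2)$ has, on $\vert\xi\vert\sim N$, a Hessian in $\xi$ whose eigenvalues are both $\sim tN^2$ and whose determinant is $\sim t^2N^4$; stationary phase (with the trivial bound by the measure of the support when the phase does not oscillate) then produces the gained dispersive estimate
\[
\Vert U(t)P_Nf\Vert_{L^\infty_x}\lesssim\min\!\big(N^2,\,\vert t\vert^{-1}N^{-2}\big)\Vert P_Nf\Vert_{L^1_x},
\]
the two regimes meeting at $\vert t\vert\sim N^{-4}$. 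Interpolating with $L^2_x$-conservation and running the $TT^\ast$ machinery reduces the block estimate to the one-dimensional convolution bound
\[
\Big\Vert\int_0^T\min\!\big(N^{2\theta},\vert t-t'\vert^{-\theta}N^{-2\theta}\big)h(t')\,dt'\Big\Vert_{L^q_t}\lesssim N^{-2\mu}\Vert h\Vert_{L^{q'}_t},\qquad\theta:=2\Big(\tfrac12-\tfrac1p\Big).
\]
By Young's inequality this follows once $\Vert\min(N^{2\theta},\vert t\vert^{-\theta}N^{-2\theta})\Vert_{L^{q/2}_t([-T,T])}\lesssim N^{-2\mu}$, and splitting the integral at $\vert t\vert=N^{-4}$ shows the left side is $\sim N^{2\theta-8/q}=N^{-2\mu}$, the exponents matching precisely because $\frac4q=\theta+\mu$. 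The tail $\vert t\vert^{-\theta}$ is $L^{q/2}$-integrable near $t=0$ exactly when $\theta q/2\ge 1$, and the hypothesis $p\ge\frac{2}{1-\mu}$ is what forces $\theta\ge\mu$, hence $\theta\ge\frac2q$, which is this integrability. Summing the blocks as above finishes the homogeneous estimate.

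I expect the genuine obstacle to be the endpoint $p=\frac{2}{1-\mu}$ (equivalently $\theta=\mu$, where $\theta q/2=1$): there the time integral diverges logarithmically and Young's inequality loses a factor $\log N$ per block, so closing the estimate without loss requires replacing Young's inequality by a Keel--Tao type bilinear real-interpolation argument for the frequency-localized propagator (whose dispersive decay rate is $1$), or by exploiting the finite time horizon more carefully. Finally, the inhomogeneous estimate with $\langle\nabla\rangle^\mu$ follows by writing $\langle\xi\rangle^\mu\sim\mathbf{1}_{\vert\xi\vert\lesssim 1}+\vert\xi\vert^\mu\mathbf{1}_{\vert\xi\vert\gtrsim 1}$: the part at $\vert\xi\vert\gtrsim 1$ is dominated by the homogeneous estimate just proved, while on $\vert\xi\vert\lesssim 1$ one has $\langle\nabla\rangle^\mu\sim 1$ and the low-frequency bound above applies verbatim.
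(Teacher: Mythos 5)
First, note that the paper contains no proof of Lemma \ref{dergain} at all: it is quoted from the literature (the sentence preceding the statement points to \cite{cui2007well}, whose estimates ultimately rest on the pointwise kernel bounds of Ben-Artzi--Koch--Saut \cite{BENARTZI200087}). So your proposal must be judged on its own merits, and its architecture --- Littlewood--Paley blocks, the frequency-localized dispersive bound $\Vert e^{it(\Delta^{2}-\Delta)}P_{N}\Vert_{L^{1}\to L^{\infty}}\lesssim\min(N^{2},\vert t\vert^{-1}N^{-2})$ from stationary phase, $TT^{\ast}$, and a one-dimensional convolution estimate --- is the standard route, and it is correct throughout the open range $p>\frac{2}{1-\mu}$, where $\theta:=1-\frac{2}{p}>\mu$, i.e.\ $\theta q/2>1$, so Young applies without loss. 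On the boundary $p=\frac{2}{1-\mu}$ with $\mu<1$ (so $\theta=\mu=\frac{2}{q}$ and $q>2$), the repair is much lighter than the Keel--Tao machinery you invoke: discard the truncated piece, dominate the kernel by $N^{-2\theta}\vert t-t'\vert^{-\theta}$ with $0<\theta<1$, and apply Hardy--Littlewood--Sobolev, which maps $L^{q'}_{t}\to L^{q}_{t}$ exactly when $\theta=\frac{2}{q}$ and produces the clean constant $N^{-2\mu}$ with no logarithm.

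The genuine gap is the corner $\mu=1$, where the hypotheses force $(q,p)=(2,\infty)$ (since $\frac{2}{1-\mu}=\infty$) and $\theta=1$: this case is part of the statement, and it is precisely the instance the paper uses --- \eqref{keyestimate} with $\mu=1$ in $L^{2}_{t}L^{\infty}_{x}$ appears in every case of the almost conservation law. There your scheme cannot close: Young costs the factor $\log(TN^{4})$ (the kernel's $L^{1}_{t}$ norm is $\sim N^{-2}\log(TN^{4})$, which grows in $N$ for fixed $T$, so the finite horizon does not help); HLS requires exponent strictly below $1$; and the Keel--Tao endpoint theorem assumes decay rate $\sigma>1$ and explicitly excludes $(2,\infty)$ at $\sigma=1$ --- indeed, for the 2D Schr\"odinger group, which has the same frequency-localized $\vert t\vert^{-1}$ decay and the same full circle of group-velocity directions on an annulus, this endpoint is known to fail, so no $TT^{\ast}$/interpolation argument that takes absolute values of the kernel, of the type you propose, can succeed. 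What your argument does yield is the corner with an arbitrarily small loss: run the boundary case with $q=2+\varepsilon$, $p=\infty$, $\mu=1-\varepsilon'$ via HLS, then H\"older in time on $[0,T]$, obtaining $\Vert e^{it(\Delta^{2}-\Delta)}P_{N}f\Vert_{L^{2}_{t}L^{\infty}_{x}}\lesssim_{T_{0}}N^{-1+}\Vert f\Vert_{L^{2}_{x}}$; this suffices for every application in the paper, since each use of \eqref{keyestimate} comes with $N^{0-}$ slack, but it does not prove the lemma as stated. A secondary defect: your opening reduction via the square-function estimate is invalid at $p=\infty$ (Littlewood--Paley fails in $L^{\infty}$), and at exact admissibility there is no margin for an $\ell^{1}$ substitute; this again afflicts only $p=\infty$, where the paper in any case applies the bound to frequency-localized inputs (Lemma \ref{oguz}).
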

\noindent By making use of Lemma \ref{dergain}, we might gain certain amount of derivative as the following lemma shows. 
\begin{lemma}\label{oguz}
    Let $u\in X^{0,\frac{1}{2}+}$ with the spatial frequency support in the set $\{\xi\in\mathbb{R}^{2}:\vert\xi\vert\sim N\}$ for $N\geq 1$. Then for sufficiently small $\delta>0$, we have
    \begin{equation}\label{eq:refined_str_est}
        \Vert\vert\nabla\vert^{\mu}u\Vert_{L^{q}_{t}L^{p}_{x}([0,\delta]\times\mathbb{R}^{2})}\lesssim\Vert u\Vert_{X^{0,\frac{1}{2}+}_{\delta}}
    \end{equation}
    where $\mu,p,q$ are given as in Lemma \ref{dergain}.
\end{lemma}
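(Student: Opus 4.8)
The estimate \eqref{eq:refined_str_est} is an instance of the transference principle: it converts the time-local linear bound of Lemma \ref{dergain}, which only sees free solutions, into an estimate for arbitrary functions measured in $X^{s,b}$. The plan is first to pass to the global norm. Since the left-hand side of \eqref{eq:refined_str_est} depends only on the values of $u$ on $[0,\delta]$, for any function $v$ agreeing with $u$ on $[0,\delta]$ we have $\Vert\vert\nabla\vert^{\mu}u\Vert_{L^{q}_{t}L^{p}_{x}([0,\delta]\times\mathbb{R}^{2})}=\Vert\vert\nabla\vert^{\mu}v\Vert_{L^{q}_{t}L^{p}_{x}([0,\delta]\times\mathbb{R}^{2})}$, so it suffices to prove
\begin{equation*}
\Vert\vert\nabla\vert^{\mu}v\Vert_{L^{q}_{t}L^{p}_{x}([0,\delta]\times\mathbb{R}^{2})}\lesssim\Vert v\Vert_{X^{0,\frac{1}{2}+}}
\end{equation*}
and then take the infimum over all such extensions $v$, which by \eqref{eq:trunc-X_sb} produces $\Vert u\Vert_{X^{0,\frac{1}{2}+}_{\delta}}$ on the right.

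Next I would disintegrate $v$ into modulated free waves. Writing $v$ via its space-time Fourier transform and substituting $\tau=\lambda+\vert\xi\vert^{4}+\vert\xi\vert^{2}$, one obtains, up to a harmless constant,
\begin{equation*}
v(t,x)=\int_{\mathbb{R}}e^{it\lambda}\big(e^{it(\Delta^{2}-\Delta)}g_{\lambda}\big)(x)\,\text{d}\lambda,\qquad\widehat{g_{\lambda}}(\xi)=\widetilde{v}(\lambda+\vert\xi\vert^{4}+\vert\xi\vert^{2},\xi),
\end{equation*}
since the Fourier symbol of $e^{it(\Delta^{2}-\Delta)}$ is $e^{it(\vert\xi\vert^{4}+\vert\xi\vert^{2})}$, matching the weight in \eqref{eq:X_sb}. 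Applying $\vert\nabla\vert^{\mu}$, invoking Minkowski's integral inequality in $L^{q}_{t}L^{p}_{x}$, and observing that the factor $e^{it\lambda}$ is unimodular (so it leaves the $L^{q}_{t}L^{p}_{x}$ norm unchanged and commutes with the spatial multiplier $\vert\nabla\vert^{\mu}$), I would arrive at
\begin{equation*}
\Vert\vert\nabla\vert^{\mu}v\Vert_{L^{q}_{t}L^{p}_{x}([0,\delta]\times\mathbb{R}^{2})}\lesssim\int_{\mathbb{R}}\Vert\vert\nabla\vert^{\mu}e^{it(\Delta^{2}-\Delta)}g_{\lambda}\Vert_{L^{q}_{t}L^{p}_{x}([0,\delta]\times\mathbb{R}^{2})}\,\text{d}\lambda\lesssim\int_{\mathbb{R}}\Vert g_{\lambda}\Vert_{L^{2}_{x}(\mathbb{R}^{2})}\,\text{d}\lambda,
\end{equation*}
where the final inequality is precisely Lemma \ref{dergain} applied on $[0,\delta]$ with a fixed $T_{0}>\delta$, which keeps the implicit constant uniform for all sufficiently small $\delta$.

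Finally I would close the estimate with Cauchy--Schwarz in $\lambda$. Splitting off the weight $\langle\lambda\rangle^{\frac{1}{2}+}$ and using that $\int_{\mathbb{R}}\langle\lambda\rangle^{-(1+)}\,\text{d}\lambda<\infty$, the change of variables $\lambda=\tau-\vert\xi\vert^{4}-\vert\xi\vert^{2}$ identifies $\big(\int_{\mathbb{R}}\langle\lambda\rangle^{1+}\Vert g_{\lambda}\Vert_{L^{2}_{x}}^{2}\,\text{d}\lambda\big)^{1/2}$ with $\Vert v\Vert_{X^{0,\frac{1}{2}+}}$, yielding the desired bound; passing to the infimum over extensions then completes the proof.

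The main obstacle is the transference step itself, namely turning the purely time-local Strichartz bound of Lemma \ref{dergain} into a statement about general $X^{s,b}$ functions. The mechanism is the Fourier disintegration above together with the integrability of $\langle\lambda\rangle^{-2b}$, which requires the exponent $b=\tfrac{1}{2}+$ to exceed $\tfrac{1}{2}$: with $b=\tfrac{1}{2}$ the superposition integral over $\lambda$ would diverge and the argument would break down. The spatial frequency localization $\vert\xi\vert\sim N$ with $N\geq1$ is not essential to the transference; it is recorded because the estimate will be applied to the Littlewood--Paley pieces $P_{N}u$, with an implicit constant independent of $N$.
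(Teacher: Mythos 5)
Your proof is correct and is exactly the argument the paper has in mind: the paper's own ``proof'' is a one-line citation to \cite{başakoğlu2023global}, where the lemma is obtained by the same standard transference mechanism you spell out (reduce to an arbitrary extension, disintegrate into modulated free waves $e^{it\lambda}e^{it(\Delta^{2}-\Delta)}g_{\lambda}$, apply Lemma \ref{dergain} with a fixed $T_{0}$, and close with Cauchy--Schwarz in $\lambda$ using $b=\tfrac{1}{2}+>\tfrac{1}{2}$). Your closing remarks --- that the infimum over extensions yields the $X^{0,\frac{1}{2}+}_{\delta}$ norm, and that the frequency localization $\vert\xi\vert\sim N$ is only recorded for the later Bernstein step \eqref{keyestimate} with an $N$-independent constant --- are also accurate.
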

\begin{proof}
    The proof proceeds as in \cite{başakoğlu2023global} with minor modifications. 
\end{proof}
\begin{Remark}
By means of the derivative gain in \eqref{eq:refined_str_est} together with the Bernstein's inequality \eqref{eq:Berns_ineq}, for $u$ as in Lemma \ref{oguz}, one has
\begin{equation}\label{keyestimate}
    \Vert u\Vert_{L^{q}_{t}L^{p}_{x}([0,\delta]\times\mathbb{R}^{2})}\lesssim N^{-\mu}\Vert u\Vert_{X^{0,\frac{1}{2}+}_{\delta}}.
\end{equation}
\end{Remark}
\section{Almost Conservation Law}
Given $s<2$ and a parameter $N\gg 1$, define the Fourier multiplier operator 
\begin{equation}\label{eq:I-opt}
    \widehat{I_{N}f}(\xi)=m_{N}(\xi)\widehat{f}(\xi),
\end{equation}
where 
\begin{equation}\label{eq:defn-of-m}
    m_{N}(\xi)=\begin{cases}
    1\quad&\text{if}\,\,\vert\xi\vert\leq N,\\
    \vert\xi\vert^{s-2}N^{2-s}\quad &\text{if}\,\,\vert\xi\vert>2N.
    \end{cases}
\end{equation}
Note that $m_N$ is smooth, radial and  non-increasing in $\vert\xi\vert$.
For simplicity, we shall drop the subscript $N$ in \eqref{eq:I-opt} and \eqref{eq:defn-of-m}. Hence, the multiplier $m$ satisfies the condition
\begin{equation*}
    \vert\nabla_{\xi}^{j}m(\xi)\vert\lesssim\vert\xi\vert^{-j}\text{ for $j\geq 0$, and $\xi\in\mathbb{R}^{n}\setminus\{0\}$}
\end{equation*}
implying that $m$ is a Hörmander-Mikhlin multiplier (see \cite{Shamir1966}). The modified energy is given by
\begin{equation*}
    E(Iu(t))=\frac{1}{2}\int_{\mathbb{R}^{2}}\vert\Delta Iu\vert^{2}+\vert\nabla Iu\vert^{2}+\frac{1}{k+1}\vert Iu\vert^{2k+2}\,\text{d}x.
\end{equation*}
By taking the time derivative of the modified energy, using Green's identities, and applying Plancherel's formula, we have
\begin{align*}
    \frac{d}{dt}E(Iu(t))&=\Im\int_{\sum_{i=1}^{2k+2}\xi_{i}=0}\Big(1-\frac{m(\xi_{2\dots 2k+2})}{m(\xi_{2})\dots m(\xi_{2k+2})}\Big)\reallywidehat{\overline{\Delta^{2}Iu}}(\xi_{1})\widehat{Iu}(\xi_{2})\widehat{\overline{Iu}}(\xi_{3})\dots\widehat{Iu}(\xi_{2k+2})\\&+\Im\int_{\sum_{i=1}^{2k+2}\xi_{i}=0}\Big(\frac{m(\xi_{2\dots 2k+2})}{m(\xi_{2})\dots m(\xi_{2k+2})}-1\Big)\reallywidehat{\overline{\Delta Iu}}(\xi_{1})\widehat{Iu}(\xi_{2})\widehat{\overline{Iu}}(\xi_{3})\dots\widehat{Iu}(\xi_{2k+2})\\&+\Im\int_{\sum_{i=1}^{2k+2}\xi_{i}=0}\Big(1-\frac{m(\xi_{2\dots 2k+2})}{m(\xi_{2})\dots m(\xi_{2k+2})}\Big)\reallywidehat{I(\vert u\vert^{2k}\overline{u})}(\xi_{1})\widehat{Iu}(\xi_{2})\widehat{\overline{Iu}}(\xi_{3})\dots\widehat{Iu}(\xi_{2k+2}).
    \end{align*}
    Therefore, in estimating the quantity $E(Iu(\delta))-E(Iu(0))$, we deal with the following terms
    \begin{align*}&\Big|\int_0^{\delta}\int_{\sum_{i=1}^{2k+2}\xi_{i}=0}\Big(1-\frac{m(\xi_{2\dots 2k+2})}{m(\xi_{2})\dots m(\xi_{2k+2})}\Big)\reallywidehat{\overline{\Delta^{2}Iu}}(\xi_{1})\widehat{Iu}(\xi_{2})\widehat{\overline{Iu}}(\xi_{3})\dots\widehat{Iu}(\xi_{2k+2})\Big|\\&+\Big|\int_0^{\delta}\int_{\sum_{i=1}^{2k+2}\xi_{i}=0}\Big(1-\frac{m(\xi_{2\dots 2k+2})}{m(\xi_{2})\dots m(\xi_{2k+2})}\Big)\reallywidehat{\overline{\Delta Iu}}(\xi_{1})\widehat{Iu}(\xi_{2})\widehat{\overline{Iu}}(\xi_{3})\dots\widehat{Iu}(\xi_{2k+2})\Big|\\&+\Big|\int_0^{\delta}\int_{\sum_{i=1}^{2k+2}\xi_{i}=0}\Big(1-\frac{m(\xi_{2\dots 2k+2})}{m(\xi_{2})\dots m(\xi_{2k+2})}\Big)\reallywidehat{I(\vert u\vert^{2k}\overline{u})}(\xi_{1})\widehat{Iu}(\xi_{2})\widehat{\overline{Iu}}(\xi_{3})\dots\widehat{Iu}(\xi_{2k+2})\Big|\\&=: Term_{1}+Term_{2}+Term_{3}
\end{align*}
for which we aim to establish a reasonable bound depending on the spatial dimension, frequency cut-off parameter $N$, $k$, and the $H^{s}$-norm of the initial data $u_{0}$. Before proceeding further to this issue, we need to establish a local existence result for the modified equation
\begin{equation}\label{eq:mod_4NLS}
    \begin{cases}
        i\partial_{t}Iu+\Delta^{2}Iu-\Delta Iu+I(\vert u\vert^{2k}u)=0,\\
        Iu(0,x)=Iu_{0}(x).
    \end{cases}
\end{equation}
\begin{lemma}[Modified Local Existence]\label{mod_local_exst} Assume $2-\frac{3}{4k}<s<2$. Let $u_{0}\in H^{s}_{x}(\mathbb{R}^{2})$ be given. Then there exists a positive number $\delta$ and $c>0$ such that the IVP \eqref{eq:mod_4NLS} has a unique local solution $Iu\in C([0,\delta];H_{x}^{2}(\mathbb{R}^{2}))$ such that
\begin{equation*}
    \Vert Iu\Vert_{X^{2,\frac{1}{2}+}_{\delta}}\leq c\Vert Iu_{0}\Vert_{H^{2}_{x}(\mathbb{R}^{2})}.
\end{equation*}
Moreover, the existence time can be estimated by 
\begin{equation}\label{eq:exponent_of_time_param}
    \delta^{1-}\sim \frac{1}{\Vert Iu_{0}\Vert^{2k}_{H^{2}_{x}(\mathbb{R}^{2})}}.
\end{equation}
\end{lemma}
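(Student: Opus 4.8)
The plan is to solve \eqref{eq:mod_4NLS} by a contraction mapping argument in the restricted space $X^{2,\frac12+}_{\delta}$. Writing $L=\Delta^{2}-\Delta$ and passing to the Duhamel formulation, I would look for a fixed point of
\[
\Phi(Iu)(t)=e^{itL}Iu_{0}-i\int_{0}^{t}e^{i(t-t')L}I(\vert u\vert^{2k}u)(t')\,dt'
\]
on a ball $B_{R}=\{\,v:\Vert v\Vert_{X^{2,\frac12+}_{\delta}}\le R\,\}$ with $R\sim\Vert Iu_{0}\Vert_{H^{2}_{x}}$, where $u=I^{-1}v$ (this is well defined since $m>0$). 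The two ingredients supplied by the linear theory are the standard homogeneous bound $\Vert e^{itL}Iu_{0}\Vert_{X^{2,\frac12+}_{\delta}}\lesssim\Vert Iu_{0}\Vert_{H^{2}_{x}}$ and the inhomogeneous (energy) estimate $\Vert\int_{0}^{t}e^{i(t-t')L}F\,dt'\Vert_{X^{2,\frac12+}_{\delta}}\lesssim\delta^{0+}\Vert F\Vert_{X^{2,-\frac12+}_{\delta}}$; both are proved exactly as in the classical $X^{s,b}$ framework and reduce the whole problem to a single multilinear estimate for the nonlinearity.

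The heart of the argument is the $(2k+1)$-linear bound
\[
\Big\Vert I\big(u^{(1)}\cdots u^{(2k+1)}\big)\Big\Vert_{X^{2,-\frac12+}_{\delta}}\lesssim\delta^{1-}\prod_{i=1}^{2k+1}\Vert Iu^{(i)}\Vert_{X^{2,\frac12+}_{\delta}},
\]
where each $u^{(i)}$ stands for $u$ or $\overline{u}$. To prove it I would first Littlewood--Paley decompose every factor and the output. The operator $I$ is disposed of using that its symbol $m$ in \eqref{eq:defn-of-m} is radial and non-increasing: in each frequency interaction the output weight $m(\xi_{1})$ is controlled by the symbols of the inputs, which lets me pass from $I(\prod u^{(i)})$ to a product of $v^{(i)}=Iu^{(i)}$ at the cost of explicit powers of the dyadic frequencies and of the parameter $N$. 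One then places each localized factor in a Strichartz norm $L^{q}_{t}L^{p}_{x}$ and invokes the refined estimates \eqref{eq:refined_str_est}--\eqref{keyestimate}, which convert the strong dispersive smoothing of $e^{itL}$ into a genuine gain $N^{-\mu}$ of up to (almost) one derivative per factor; Hölder in the spatial variable combines the factors, while Hölder in the time variable on $[0,\delta]$ produces the power of $\delta$. The two output derivatives of the $X^{2}$-norm are absorbed either by transferring them onto the highest-frequency factor or, in the unbalanced interactions, by exploiting the large modulation forced by the fourth-order dispersion relation $\tau-\vert\xi\vert^{4}-\vert\xi\vert^{2}$.

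The delicate point, and the step I expect to be the main obstacle, is the simultaneous bookkeeping in this multilinear estimate: one must choose the admissible exponents $(\mu_{i},p_{i},q_{i})$ so that (i) the dyadic sum over the $2k+1$ frequencies converges, (ii) the gains $N^{-\mu_{i}}$ compensate the loss incurred by $I^{-1}$ on the non-dominant high-frequency factors, and (iii) enough room is left in the time exponents to extract the full power $\delta^{1-}$. Tracking these competing constraints is exactly what requires the accumulated loss $2k(2-s)$ coming from the $I$-operator to stay below the total available derivative gain, i.e. $2k(2-s)<\tfrac32$, which is precisely the threshold $s>2-\frac{3}{4k}$ in the hypothesis.

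Once the multilinear estimate is in hand, the contraction closes in a routine way: on $B_{R}$ one obtains $\Vert\Phi(v)\Vert_{X^{2,\frac12+}_{\delta}}\le cR+C\delta^{1-}R^{2k+1}$ together with an analogous difference estimate, so choosing $\delta$ with $C\delta^{1-}R^{2k}\ll1$, that is $\delta^{1-}\sim\Vert Iu_{0}\Vert_{H^{2}_{x}}^{-2k}$, makes $\Phi$ a contraction of $B_{R}$ into itself. This produces the unique solution, the bound $\Vert Iu\Vert_{X^{2,\frac12+}_{\delta}}\le c\Vert Iu_{0}\Vert_{H^{2}_{x}}$, and the time estimate \eqref{eq:exponent_of_time_param}. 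The embedding $X^{2,\frac12+}_{\delta}\hookrightarrow C([0,\delta];H^{2}_{x})$ gives the stated continuity, and uniqueness follows by the usual difference argument.
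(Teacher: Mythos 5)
Your overall framework --- Duhamel formulation, contraction in $X^{2,\frac{1}{2}+}_{\delta}$ on a ball of radius $r\sim\Vert Iu_{0}\Vert_{H^{2}_{x}}$, reduction of everything to the multilinear bound $\Vert I(\vert u\vert^{2k}u)\Vert_{X^{2,-\frac{1}{2}+}_{\delta}}\lesssim\delta^{1-}\Vert Iu\Vert^{2k+1}_{X^{2,\frac{1}{2}+}_{\delta}}$, and the resulting time scale $\delta^{1-}\sim r^{-2k}$ --- coincides with the paper's. The genuine gap is that you never prove this multilinear estimate: you outline a Littlewood--Paley decomposition combined with the derivative-gain Strichartz estimates \eqref{keyestimate}, and then explicitly defer the ``simultaneous bookkeeping'' that you yourself flag as the main obstacle, asserting without proof that the constraints close exactly when $2k(2-s)<\frac{3}{2}$. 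That assertion is moreover mis-attributed: the inequality $2k(2-s)<\frac{3}{2}$, i.e.\ $s>2-\frac{3}{4k}$, is not a constraint on the local estimate at all, but the condition $4ks-8k+3>0$ that emerges in Section 4 when the per-step energy increment $N^{-3+}$ of the almost conservation law is balanced against the number of iterations $T\delta^{-1}\sim TN^{2k(2-s)}$. The local lemma itself closes for essentially all $s>1$ (in particular throughout the stated range), and there is no ``accumulated loss $2k(2-s)$ from $I^{-1}$'' to compensate in it.

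The missing idea, which makes the estimate easy, is the paper's disposal of the $I$-operator by duality plus the monotonicity of $x\mapsto m(x)x^{2-s}$: on the convolution hyperplane this yields the pointwise bound \eqref{eq:hyperplane_est}, which removes the multiplier at no cost and reduces \eqref{eq:mod_mult_est_time} to the unweighted estimate
\begin{equation*}
\Vert(\langle\nabla\rangle^{s}h_{1})h_{2}\cdots h_{2k+1}\Vert_{L^{2}_{t,x}}\lesssim\prod_{j=1}^{2k+1}\Vert h_{j}\Vert_{X^{s,\frac{1}{2}+}}.
\end{equation*}
This follows from H\"older with one factor in $L^{2}_{t,x}$ and $2k$ factors in $L^{\infty}_{t,x}$ via the embedding \eqref{eq:embedding} (available in two dimensions since $s>1$); no dyadic decomposition and no refined Strichartz estimates are needed --- \eqref{keyestimate} enters only in the proof of the almost conservation law. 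Note also that the crucial power $\delta^{1-}$ is not produced by H\"older in time on Strichartz norms as you suggest, but by two separate $b$-exponent reductions: one factor $\delta^{\frac{1}{2}-}$ from \eqref{eq:time_param_gain}, and a second $\delta^{\frac{1}{2}-}$ from placing $h_{1}$ in $X^{s,0}_{\delta}$ rather than $X^{s,\frac{1}{2}-}_{\delta}$. Until your dyadic bookkeeping is actually carried out (or replaced by this simpler argument), the heart of the lemma --- and with it the exponent in \eqref{eq:exponent_of_time_param} that drives the global theorem --- remains unestablished.
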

\begin{proof}
    Let $\eta$ be a compactly supported, smooth, radial function in the time variable satisfying $0\leq\eta\leq 1$, $\eta\equiv 1$ on $[-1,1]$ and $supp(\eta)\subseteq[-2,2]$. Define $\eta_{\delta}(t)=\eta(t/\delta)$ for $\delta>0$. We begin with recalling the standard $X^{s,b}$-estimates:
\begin{align}
    \Vert\eta(t)e^{it(\Delta^{2}-\Delta)} Iu_{0}\Vert_{X^{2,\frac{1}{2}+}}&\lesssim\Vert Iu_{0}\Vert_{H^{2}_{x}(\mathbb{R}^{d})},\label{eq:free_soln_est}\\
    \Big\Vert\eta_{\delta}(t)\int_{0}^{t}e^{i(t-t')(\Delta^{2}-\Delta)}I(\vert u\vert^{2k}u)(\cdot,t')\,\text{d}t'\Big\Vert_{X^{2,\frac{1}{2}+}}&\lesssim\Vert I(\vert u\vert^{2k}u)\Vert_{X_{\delta}^{2,-\frac{1}{2}+}},\label{eq:nonl_part_est}\\
    \Vert I(\vert u\vert^{2k}u)\Vert_{X_{\delta}^{2,-\frac{1}{2}+}}&\lesssim\delta^{\frac{1}{2}-}\Vert I(\vert u\vert^{2k}u)\Vert_{X^{2,0}_{\delta}}.\label{eq:time_param_gain}
\end{align}
In order to implement the Banach fixed point argument, we define the integral equation
\begin{equation}
    \Gamma(Iu)(t)=\eta(t)e^{it(\Delta^{2}-\Delta)}Iu_{0}+i\eta_{\delta}(t)\int_{0}^{t}e^{i(t-t')(\Delta^{2}-\Delta)}I(\vert u\vert^{2k}u)(t')\,\text{d}t'.
\end{equation}
Via \eqref{eq:free_soln_est}--\eqref{eq:time_param_gain}, we have
\begin{align*}
    \Vert\Gamma(Iu)\Vert_{X^{2,\frac{1}{2}+}}\leq c\Vert Iu_{0}\Vert_{H^{2}_{x}}+c\delta^{\frac{1}{2}-}\Vert I(\vert u\vert^{2k}u)\Vert_{X^{2,0}_{\delta}}.
\end{align*}
We wish to establish that
\begin{equation}\label{eq:mod_mult_est_time}
    \Vert I(\vert u\vert^{2k}u)\Vert_{X^{2,0}_{\delta}}\lesssim\delta^{\frac{1}{2}-}\Vert Iu\Vert_{X^{2,\frac{1}{2}+}_{\delta}}^{2k+1}.
\end{equation}
Before doing so, we exploit \eqref{eq:mod_mult_est_time} so as to obtain
\begin{equation}\label{eq:exp_gain_delta}
     \Vert\Gamma(Iu)\Vert_{X^{2,\frac{1}{2}+}}\lesssim\Vert Iu_{0}\Vert_{H^{2}_{x}}+\delta^{1-}\Vert Iu\Vert_{X^{2,\frac{1}{2}+}_{\delta}}^{2k+1}.
\end{equation}
To see \eqref{eq:mod_mult_est_time}, we shall follow the argument from Proposition 4.10 in \cite{erdoğan_tzirakis_2016}. By duality and ignoring the $\delta$-dependence, we have
\begin{equation*}
    \Vert I(\vert u\vert^{2k}u)\Vert_{X^{2,0}}=\sup_{\Vert f\Vert_{X^{-2,0}}=1}\Big\vert\langle I(\vert u\vert^{2k}u),f\rangle_{L^{2}(\mathbb{R}\times\mathbb{R}^{2})}\Big\vert.
\end{equation*}
Using Plancherel's theorem, we have
\begin{multline*}
    \int\int I(\vert u\vert^{2k}u)(t,x)\overline{f(t,x)}\text{d}x\text{d}t=\\\int_{\sum_{i=1}^{2k+2}\tau_{i}=0}\int_{\sum_{i=1}^{2k+2}\xi_{i}=0}m(\xi_{2k+2})\widehat{u}(\tau_{1},\xi_{1})\reallywidehat{\overline{u}}(\tau_{2},\xi_{2})\dots\widehat{u}(\tau_{2k+1},\xi_{2k+1})\reallywidehat{\overline{f}}(\tau_{2k+2},\xi_{2k+2}).
\end{multline*}
Define
\begin{align*}
    f_{1}(\tau_{1},\xi_{1})=&\vert\widehat{u}(\tau_{1},\xi_{1})\vert m(\xi_{1})\langle\xi_{1}\rangle^{2}\langle\tau_{1}-\vert\xi_{1}\vert^{4}-\vert\xi_{1}\vert^{2}\rangle^{\frac{1}{2}+}\\
    f_{2}(\tau_{2},\xi_{2})=&\vert\widehat{u}(-\tau_{2},-\xi_{2})\vert m(\xi_{2})\langle\xi_{2}\rangle^{2}\langle\tau_{2}+\vert\xi_{2}\vert^{4}+\vert\xi_{2}\vert^{2}\rangle^{\frac{1}{2}+}\\
    \vdots&\\
    f_{2k+1}(\tau_{2k+1},\xi_{2k+1})=&\vert\widehat{u}(\tau_{2k+1},\xi_{2k+1})\vert m(\xi_{2k+1})\langle\xi_{2k+1}\rangle^{2}\langle\tau_{2k+1}-\vert\xi_{2k+1}\vert^{4}-\vert\xi_{2k+1}\vert^{2}\rangle^{\frac{1}{2}+}\\
    f_{2k+2}(\tau_{2k+2},\xi_{2k+2})=&\vert\widehat{f}(\tau_{2k+2},\xi_{2k+2})\vert\langle\xi_{2k+2}\rangle^{-2}.
\end{align*}
Therefore, \eqref{eq:mod_mult_est_time} amounts to showing that 
\begin{equation*}
\int_{\sum_{i=1}^{2k+2}\tau_{i}=0}\int_{\sum_{i=1}^{2k+2}\xi_{i}=0}\frac{m(\xi_{2k+2})\langle\xi_{2k+2}\rangle^{2}}{\prod_{i=1}^{2k+1}m(\xi_{i})\langle\xi_{i}\rangle^{2}\langle\tau_{i}-(-1)^{i-1}(\vert\xi_{i}\vert^{4}+\vert\xi_{i}\vert^{2})\rangle^{\frac{1}{2}+}}\prod_{i=1}^{2k+2}f_{i}(\tau_{i},\xi_{i})\lesssim\prod_{i=1}^{2k+2}\Vert f_{i}\Vert_{L^{2}_{\tau,\xi}}.
\end{equation*}
The function $m(x)x^{2-s}$ is increasing so that on the hyperplane $\sum_{i=1}^{2k+1}\xi_{i}=0$, we have
\begin{equation}\label{eq:hyperplane_est}
\frac{m(\xi_{2k+2})\langle\xi_{2k+2}\rangle^{2-s}}{\prod_{i=1}^{2k+1}m(\xi_{i})\langle\xi_{i}\rangle^{2-s}}\lesssim 1.
\end{equation}
Thus, using \eqref{eq:hyperplane_est}, it suffices to show that
\begin{equation*}
\int_{\sum_{i=1}^{2k+2}\tau_{i}=0}\int_{\sum_{i=1}^{2k+2}\xi_{i}=0}\frac{\langle\xi_{2k+2}\rangle^{s}}{\prod_{i=1}^{2k+1}\langle\xi_{i}\rangle^{s}\langle\tau_{i}-(-1)^{i-1}(\vert\xi_{i}\vert^{4}+\vert\xi_{i}\vert^{2})\rangle^{\frac{1}{2}+}}\prod_{i=1}^{2k+2}f_{i}(\tau_{i},\xi_{i})\lesssim\prod_{i=1}^{2k+2}\Vert f_{i}\Vert_{L^{2}_{\tau,\xi}}.
\end{equation*}
Recall for $s\geq 0$ and $\sum_{i=1}^{2k+2}\xi_{i}=0$ that
\begin{equation*}
    \langle\xi_{2k+2}\rangle^{s}\lesssim\sum_{i=1}^{2k+1}\langle\xi_{i}\rangle^{s}
\end{equation*}
and by the symmetry in the frequency variables $\xi_{1},\dots,\xi_{2k+1}$, one may assume that $\vert\xi_{1}\vert\geq\dots\geq\vert\xi_{2k+1}\vert$. Hence, it is enough to prove that
\begin{equation}\label{eq:mult_disp_est}
\int_{\sum_{i=1}^{2k+2}\tau_{i}=0}\int_{\sum_{i=1}^{2k+2}\xi_{i}=0}\frac{\langle\xi_{1}\rangle^{s}}{\prod_{i=1}^{2k+1}\langle\xi_{i}\rangle^{s}\langle\tau_{i}-(-1)^{i-1}(\vert\xi_{i}\vert^{4}+\vert\xi_{i}\vert^{2})\rangle^{\frac{1}{2}+}}\prod_{i=1}^{2k+2}f_{i}(\tau_{i},\xi_{i})\lesssim\prod_{i=1}^{2k+2}\Vert f_{i}\Vert_{L^{2}_{\tau,\xi}}.
\end{equation}
Reverting back the Plancherel formula to come back to the physical space-time variables $x,t$, and using duality, the inequality in \eqref{eq:mult_disp_est} is equivalent to showing that
\begin{equation*}
    \Vert(\langle\nabla\rangle^{s}h_{1})h_{2}\dots h_{2k+1}\Vert_{L^{2}_{t,x}}\lesssim\prod_{j=1}^{2k+1}\Vert h_{j}\Vert_{X^{s,\frac{1}{2}+}}
\end{equation*}
where
\begin{equation*}
    h_{i}(t,x)=\mathcal{F}^{-1}_{\tau,\xi}\big(\langle\xi_{i}\rangle^{-s}\langle\tau_{i}-(-1)^{i-1}(\vert\xi_{i}\vert^{4}+\vert\xi_{i}\vert^{2})\rangle^{-\frac{1}{2}-}f_{i}(\tau_{i},\xi_{i})\big)(t,x)
\end{equation*}
for $1\leq i\leq 2k+1$. Now, using the following continuous embeddings
\begin{equation*}
    X^{1+,\frac{1}{2}+}(\mathbb{R}\times\mathbb{R}^{2})\hookrightarrow C^{0}_{t}H^{1+}_{x}(\mathbb{R}\times\mathbb{R}^{2})\hookrightarrow C^{0}_{t}C^{0}_{x}(\mathbb{R}\times\mathbb{R}^{2})
\end{equation*}
we get
\begin{equation}\label{eq:embedding}
    \Vert h\Vert_{L^{\infty}_{t,x}}\lesssim \Vert h\Vert_{X^{1+,\frac{1}{2}+}}.
\end{equation}
Therefore, using Hölder inequality and \eqref{eq:embedding}, we arrive at
\begin{align}
    \Vert(\langle\nabla\rangle^{s}h_{1})h_{2}\dots h_{2k+1}\Vert_{L^{2}_{t,x}}\leq\Vert\langle\nabla\rangle^{s}h_{1}\Vert_{L^{2}_{t,x}}\prod_{i=2}^{2k+1}\Vert h_{i}\Vert_{L^{\infty}_{t,x}}&\lesssim\Vert h_{1}\Vert_{X^{s,0}}\prod_{i=2}^{2k+1}\Vert h_{i}\Vert_{X^{1+,\frac{1}{2}+}}\nonumber\\
    &\lesssim\Vert h_{1}\Vert_{X^{s,0}}\prod_{i=2}^{2k+1}\Vert h_{i}\Vert_{X^{s,\frac{1}{2}+}}.\label{eq:cts_emb_mult_est}
\end{align}
Now, the $\delta$-dependence can be taken into account in \eqref{eq:cts_emb_mult_est}. In view of Lemma 3.11 in \cite{erdoğan_tzirakis_2016}, we have
\begin{equation*}
    \Vert h_{1}\Vert_{X^{s,0}_{\delta}}\lesssim\delta^{\frac{1}{2}-}\Vert h_{1}\Vert_{X^{s,\frac{1}{2}-}_{\delta}}\lesssim\delta^{\frac{1}{2}-}\Vert h_{1}\Vert_{X^{s,\frac{1}{2}+}_{\delta}}.
\end{equation*}
So, since the inequality \eqref{eq:cts_emb_mult_est} holds for restricted norms as well, we have
\begin{equation*}
    \Vert(\langle\nabla\rangle^{s}h_{1})h_{2}\dots h_{2k+1}\Vert_{L^{2}_{t\in[0,\delta],x}}\lesssim\delta^{\frac{1}{2}-}\prod_{i=1}^{2k+1}\Vert h_{i}\Vert_{X^{s,\frac{1}{2}+}_{\delta}}=\delta^{\frac{1}{2}-}\prod_{i=1}^{2k+1}\Vert f_{i}\Vert_{X^{0,0}_{\delta}}=\delta^{\frac{1}{2}-}\prod_{i=1}^{2k+1}\Vert Iu\Vert_{X^{2,\frac{1}{2}+}_{\delta}}.
\end{equation*}
This completes the proof of \eqref{eq:mod_mult_est_time}. 

\noindent Proceeding as in the estimate of \eqref{eq:exp_gain_delta}, we may obtain the analogous estimate for the difference
\begin{align*}
    \Vert\Gamma(Iu)-\Gamma(Iv)\Vert_{X^{2,\frac{1}{2}+}}&\leq c'\delta^{1-}(\Vert Iu\Vert_{X^{2,\frac{1}{2}+}}^{2k}+\Vert Iv\Vert_{X^{2,\frac{1}{2}+}}^{2k})\Vert Iu-Iv\Vert_{X^{2,\frac{1}{2}+}}.
\end{align*}
Define $B(r)=\{Iu\in X^{2,\frac{1}{2}+}:\Vert Iu\Vert_{X^{2,\frac{1}{2}+}}\leq r\}$ where $r=\Vert Iu_{0}\Vert_{H^{2}_{x}}$. Choosing
\begin{equation*}
    0<\delta^{1-}<\min\{1,\frac{1}{2c'r^{2k}}\},
\end{equation*}
we guarantee that $\Gamma:B(r)\to B(r)$ is a contraction map. Thus, there exists a unique solution $Iu\in B(r)$ to the equation \eqref{eq:mod_4NLS} such that $\Gamma(Iu)=Iu$. Moreover, the restriction $Iu_{\delta}=Iu|_{[0,\delta]}\in C([0,\delta],H^{2}(\mathbb{R}^{2}))\cap X^{2,\frac{1}{2}+}_{\delta}$ is a solution of the modified equation \eqref{eq:mod_4NLS} in $[0,\delta]$ and the proof is complete.
\end{proof}
\begin{proposition}[Almost Conservation Law]\label{almost_cons_law}
Suppose $s>2-\frac{3}{4k}$, $N\gg 1$, and $Iu$ is a solution of \eqref{eq:mod_4NLS} on $[0,\delta]$ established in Lemma \ref{mod_local_exst}. Then the following estimate holds:
\begin{equation}
    E(Iu(\delta))-E(Iu(0))\lesssim N^{-3+}\Vert Iu\Vert_{X^{2,\frac{1}{2}+}_{\delta}}^{2k+2}+N^{-4+}\Vert Iu\Vert_{X^{2,\frac{1}{2}+}_{\delta}}^{4k+2}.
\end{equation}
\end{proposition}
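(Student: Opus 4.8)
\emph{Proof strategy.} The plan is to bound $Term_1$, $Term_2$ and $Term_3$ separately, with $Term_1$ being the principal obstacle since it carries the top-order operator $\Delta^2$. Two structural gains drive every estimate: the multiplier commutator $1-\frac{m(\xi_{2\dots 2k+2})}{m(\xi_2)\cdots m(\xi_{2k+2})}$, which by $\sum_i\xi_i=0$ and the radiality of $m$ equals $1-\frac{m(\xi_1)}{m(\xi_2)\cdots m(\xi_{2k+2})}$, and the derivative gain \eqref{keyestimate} afforded by the strong dispersion. The first step is a pointwise bound on the commutator symbol. Since $m$ is a Hörmander–Mikhlin multiplier that is identically $1$ on $\{\vert\xi\vert\le N\}$, the symbol vanishes unless the largest frequency is $\gtrsim N$; writing $N_{(1)}\ge N_{(2)}\ge N_{(3)}\ge\cdots$ for the decreasing rearrangement of $\vert\xi_1\vert,\dots,\vert\xi_{2k+2}\vert$, the mean value theorem together with $\vert\nabla m(\xi)\vert\lesssim m(\xi)/\vert\xi\vert$ gives a bound of the form $\big\vert 1-\frac{m(\xi_1)}{m(\xi_2)\cdots m(\xi_{2k+2})}\big\vert\lesssim N_{(3)}/N_{(1)}$ in the generic regime $N_{(1)}\sim N_{(2)}\gtrsim N$. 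I would Littlewood–Paley decompose every factor, order frequencies, and reduce to this regime: the constraint $\sum_i\xi_i=0$ forces a second comparable high frequency whenever one frequency exceeds $N$, and configurations with three or more high frequencies turn out to be strictly better and are handled analogously by redistributing the gains.

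For $Term_1$ the decisive point is that $\Delta^2 Iu$ sits two derivatives above the energy level, so at frequency $N_{(1)}$ it costs a factor $N_{(1)}^2$ relative to $\Vert Iu\Vert_{X^{2,\frac{1}{2}+}_\delta}$; no Hölder arrangement that loads the whole space–time budget onto the two highest factors is admissible for the derivative gain, since the admissibility in Lemma \ref{dergain} would then force a total gain exceeding $2$, violating $\mu\le1$. The resolution is to use the endpoint $\mu=1$, $(p,q)=(\infty,2)$: assuming the worst case that $\xi_1$ is one of the two high frequencies, I place $\Delta^2 Iu_{N_{(1)}}=\vert\nabla\vert(\vert\nabla\vert^{3}Iu_{N_{(1)}})$ and the comparable high factor in $L^2_tL^\infty_x$, which via \eqref{eq:refined_str_est}, \eqref{keyestimate}, \eqref{eq:Berns_ineq} and the equivalence $\Vert Iu_{N}\Vert_{X^{0,\frac{1}{2}+}_\delta}\sim N^{-2}\Vert Iu_N\Vert_{X^{2,\frac{1}{2}+}_\delta}$ contribute $N_{(1)}\Vert Iu_{N_{(1)}}\Vert_{X^{2,\frac{1}{2}+}_\delta}$ and $N_{(2)}^{-3}\Vert Iu_{N_{(2)}}\Vert_{X^{2,\frac{1}{2}+}_\delta}$ respectively; the factors at frequencies $N_{(3)}$ and $N_{(4)}$ go into $L^\infty_tL^2_x$ (each gaining two negative powers of its frequency), carrying the spatial budget, and the remaining $2k-2$ factors into $L^\infty_{t,x}$ via the embedding \eqref{eq:embedding}. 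Multiplying these bounds with the commutator factor $N_{(3)}/N_{(1)}$ leaves, for each dyadic configuration, a weight $\sim N_{(1)}^{-3}N_{(3)}^{-1}N_{(4)}^{-2}$ times $\prod_i\Vert Iu_{N_i}\Vert_{X^{2,\frac{1}{2}+}_\delta}$; summing the geometric series over $N_{(1)}\gtrsim N$ (the remaining frequency sums converging absolutely) then yields $Term_1\lesssim N^{-3+}\Vert Iu\Vert_{X^{2,\frac{1}{2}+}_\delta}^{2k+2}$. The identical scheme applied to $Term_2$ merely replaces $\Delta^2$ by $\Delta$, i.e.\ two fewer derivatives, so it obeys the strictly better bound $N^{-5+}\Vert Iu\Vert_{X^{2,\frac{1}{2}+}_\delta}^{2k+2}$ and is absorbed.

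For $Term_3$ the first factor $I(\vert u\vert^{2k}\overline u)$ is already at the energy level and carries no surplus derivatives, so I would treat it as a composite object, controlling it through the multilinear machinery behind \eqref{eq:mod_mult_est_time} while reserving the commutator gain and the derivative gain \eqref{keyestimate} for its interaction with the remaining $2k+1$ copies of $Iu$. The absence of the $N_{(1)}^2$ loss that plagued $Term_1$ frees up two additional negative powers; keeping track of the total of $4k+2$ copies of $u$ then produces $Term_3\lesssim N^{-4+}\Vert Iu\Vert_{X^{2,\frac{1}{2}+}_\delta}^{4k+2}$, and combining the three bounds gives the asserted law. I expect the genuine difficulty to reside entirely in $Term_1$: precisely, in establishing the endpoint commutator estimate and in recognizing that the only admissible way to pay for the two extra derivatives of $\Delta^2$ is the $\mu=1$, $L^2_tL^\infty_x$ gain, after which the dyadic summation closes exactly at the exponent $-3+$.
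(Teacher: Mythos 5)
Your overall architecture matches the paper's: a term-by-term treatment after Littlewood--Paley decomposition and frequency ordering, pointwise extraction of the commutator symbol, the $\mu=1$ derivative gain \eqref{keyestimate} in $L^{2}_{t}L^{\infty}_{x}$ placed on the two comparable high factors, and for $Term_{3}$ the composite treatment of $I(\vert u\vert^{2k}\overline{u})$ at the $L^{2}_{t,x}$ level producing $N^{-4+}$ with $4k+2$ powers of the norm (the paper does precisely this via Lemma \ref{yardımcı} and an $L^{2}\times L^{6}\times L^{6}\times L^{6}\times L^{\infty}\cdots$ H\"older splitting). Your bookkeeping in the two-high-frequency regime is sound and closes at $N^{-3+}$ just as the paper's Case 1 does, with a slightly different but equally admissible H\"older placement for the low factors.

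However, there is a genuine gap where you dismiss \say{configurations with three or more high frequencies} as \say{strictly better} and \say{handled analogously by redistributing the gains.} The asserted bound $\big\vert 1-\frac{m(\xi_{1})}{m(\xi_{2})\cdots m(\xi_{2k+2})}\big\vert\lesssim N_{(3)}/N_{(1)}$ is false once $N_{(3)}\gtrsim N$: the mean value argument requires $m\equiv 1$ on all but the two highest frequencies, and if $N\ll N_{(3)}\leq N_{(1)}$ then $m(\xi_{3})\sim(N/N_{(3)})^{2-s}\ll 1$ enters the denominator, so the symbol can be as large as $\prod_{j\geq 3}(N_{(j)}/N)^{2-s}$ --- a substantial \emph{loss}, not a gain of size $N_{(3)}/N_{(1)}\lesssim 1$. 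These regimes are exactly the paper's Cases 2 and 3 (for each of $Term_{1}$, $Term_{2}$, $Term_{3}$), and they are not better for free: one must beat the multiplier loss by combining the derivative gains with the monotonicity of $x\mapsto m(x)x^{p}$, which holds only for $p\geq 2-s$; this is precisely where the hypothesis $s>2-\frac{3}{4k}$ (so that $2-s<\frac{3}{4k}$ is below all exponents that arise) is consumed, e.g.\ in Case 2 of $Term_{1}$ via $m(N_{1})N_{1}^{2}\lesssim m(N_{2})N_{2}^{2}$ and $m(N_{2})N_{2}^{4-}\gtrsim N^{4-}$ after discarding factors $m(N_{j})N_{j}^{\frac{2k}{2k-1}}\gtrsim 1$. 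The telltale sign of the omission is that your argument never invokes the hypothesis on $s$ at all: as written it would yield the proposition for every $s<2$, which it cannot, because without the $m(x)x^{p}$ monotonicity the unestimated multiplier losses in the multi-high-frequency cases are unbounded. A complete proof must carry out the case analysis with the crude bound $\frac{m(N_{(1)})}{\prod_{j\geq 2}m(N_{(j)})}$ in those regimes, as the paper does.
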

\begin{proof} We start by estimating $Term_{1}$. Recall that
\begin{equation*}
    Term_{1}=\Big|\int_{0}^{\delta}\int_{\sum_{i=1}^{2k+2}\xi_{i}=0}\Big(1-\frac{m(\xi_{2\dots 2k+2})}{m(\xi_{2})\dots m(\xi_{2k+2})}\Big)\reallywidehat{\overline{\Delta^{2}Iu}}(\xi_{1})\widehat{Iu}(\xi_{2})\widehat{\overline{Iu}}(\xi_{3})\dots\widehat{Iu}(\xi_{2k+2})\Big|.
\end{equation*}
Let us define
\begin{align*}
    \reallywidehat{u_{N_{1}}}(\xi_{1})&=\reallywidehat{\overline{P_{N_{1}}\Delta^{2} Iu}}(\xi_{1}),\\
    \reallywidehat{u_{N_{j}}}(\xi_{j})&=\reallywidehat{P_{N_{j}}Iu}(\xi_{j}),\quad j=2,4,6,\dots,2k+2,\\
    \reallywidehat{u_{N_{l}}}(\xi_{l})&=\reallywidehat{\overline{P_{N_{l}}Iu}}(\xi_{l})\quad l=3,5,7,\dots,2k+1
\end{align*}
where $N_{i}=2^{k_{i}}$ for $k_{i}\in\mathbb{N}$, $i=1,\dots,2k+2$. Then
\begin{equation*}
    Term_{1}\leq\sum_{N_{1},\dots,N_{2k+2}}\Big|\int_{0}^{\delta}\int_{\sum_{i=1}^{2k+2}\xi_{i}=0}\big(1-\frac{m_{2\dots 2k+2}}{m_{2}\dots m_{2k+2}}\big)\widehat{u_{N_{1}}}(\xi_{1})\dots\widehat{u_{N_{2k+2}}}(\xi_{2k+2})\Big|.
\end{equation*}
By the symmetry in $\xi_{2},\dots,\xi_{2k+2}$ variables, we may assume that $N_{2}\geq\dots\geq N_{2k+2}$ and this implies $N_{1}\lesssim N_{2}$. Also, we may assume that the spatial Fourier transform of dyadic pieces are nonnegative. By this assumption, we can take the multiplier out of the integral with a pointwise bound in the different frequency interaction cases. Thus, it is enough to show that
\begin{equation}\label{eq:term1_dyad_pieces}
    \Big|\int_{0}^{\delta}\int_{\sum_{i=1}^{2k+2}\xi_{i}=0}\big(1-\frac{m_{2\dots 2k+2}}{m_{2}\dots m_{2k+2}}\big)\widehat{u_{N_{1}}}(\xi_{1})\dots\widehat{u_{N_{2k+2}}}(\xi_{2k+2})\Big|\lesssim N^{-3+}N_{2}^{0-}\Vert u_{N_{1}}\Vert_{X^{-2,\frac{1}{2}+}}\prod_{j=2}^{2k+2}\Vert u_{N_{j}}\Vert_{X^{2,\frac{1}{2}+}}.
\end{equation}
There are three frequency interaction cases to consider:\\
\noindent
\textbf{Case 1: }$N_{1}\sim N_{2}\gtrsim N\gg N_{3}$.\\
\noindent
In this case, the bound on the multiplier is given by
\begin{equation*}
    \Big|1-\frac{m_{2\dots 2k+2}}{m_{2}\dots m_{2k+2}}\Big|\lesssim\frac{N_{3}}{N_{2}}.
\end{equation*}
By Hölder's inequality, we have
\begin{equation*}
    \text{LHS of \eqref{eq:term1_dyad_pieces}}\lesssim\frac{N_{3}}{N_{2}}\Vert u_{N_{1}}\Vert_{L^{2}_{t}L^{\infty}_{x}}\Vert u_{N_{2}}\Vert_{L^{2}_{t}L^{\infty}_{x}}\prod_{j=3}^{2k+2}\Vert u_{N_{j}}\Vert_{L^{\infty}_{t}L^{2k}_{x}}.
\end{equation*}
For the first two factors, we use \eqref{keyestimate} with $\mu=1$ to get
\begin{equation*}
    \Vert u_{N_{i}}\Vert_{L^{2}_{t}L^{\infty}_{x}}\lesssim N_{i}^{-1}\Vert u_{N_{i}}\Vert_{X^{0,\frac{1}{2}+}_{\delta}},\quad i=1,2.
\end{equation*}
The remaining factors can be dealt with by using the Sobolev embedding and the Strichartz estimate for biharmonic admissible pairs:
\begin{equation*}
    \Vert u_{N_{j}}\Vert_{L^{\infty}_{t}L^{2k}_{x}}\lesssim\Vert\langle\nabla\rangle^{\frac{k-1}{k}}u_{N_{j}}\Vert_{L^{\infty}_{t}L^{2}_{x}}\lesssim N_{j}^{\frac{k-1}{k}}\Vert u_{N_{j}}\Vert_{X^{0,\frac{1}{2}+}_{\delta}},\quad j=3,\dots,2k+2.
\end{equation*}
Using the above inequalities, we obtain
\begin{equation*}
    \text{LHS of \eqref{eq:term1_dyad_pieces}}\lesssim\frac{N_{3}}{N_{2}}\frac{\prod_{j=3}^{2k+2}N_{j}^{\frac{k-1}{k}}}{N_{1}N_{2}}\frac{N_{1}^{2}}{\prod_{j=2}^{2k+2}N_{j}^{2}}\Vert u_{N_{1}}\Vert_{X^{-2,\frac{1}{2}+}_{\delta}}\prod_{j=2}^{2k+2}\Vert u_{N_{j}}\Vert_{X^{2,\frac{1}{2}+}_{\delta}}.
\end{equation*}
Thus, it is enough to demonstrate that
\begin{equation*}
    \frac{1}{N_{2}^{3}N_{3}^{1/k}\prod_{j=4}^{2k+2}N_{j}^{(k+1)/k}}\lesssim N^{-3+}N_{2}^{0-}.
\end{equation*}
Since $N_{j}\geq 1$ for all $j$ and $N_{2}\gtrsim N$, by a crude estimate, the above inequality follows by  
\begin{equation*}
    N^{3-}N_{2}^{-3+}\lesssim 1.
\end{equation*}
\noindent
\textbf{Case 2: }$N_{2}\sim N_{3}\gtrsim N$.\\
\noindent
In this case, the bound for the multiplier is given by
\begin{equation*}
    \Big|1-\frac{m_{2\dots 2k+2}}{m_{2}\dots m_{2k+2}}\Big|\lesssim\frac{m(N_{1})}{m(N_{2})\dots m(N_{2k+2})}.
\end{equation*}
Using Hölder's inequality, we have
\begin{equation*}
    \text{LHS of \eqref{eq:term1_dyad_pieces}}\lesssim\frac{m(N_{1})}{m(N_{2})\dots m(N_{2k+2})}\Vert u_{N_{1}}\Vert_{L^{\infty}_{t}L^{2}_{x}}\Vert u_{N_{2}}\Vert_{L^{2}_{t}L^{\infty}_{x}}\Vert u_{N_{3}}\Vert_{L^{2}_{t}L^{\infty}_{x}}\prod_{j=4}^{2k+2}\Vert u_{N_{j}}\Vert_{L^{\infty}_{t}L^{4k-2}_{x}}.
\end{equation*}
We implement \eqref{keyestimate} with $\mu=1$ for $u_{N_{2}},u_{N_{3}}$
\begin{equation*}
    \Vert u_{N_{i}}\Vert_{L^{2}_{t}L^{\infty}_{x}}\lesssim N_{i}^{-1}\Vert u_{N_{i}}\Vert_{X^{0,\frac{1}{2}+}_{\delta}},\quad i=2,3.
\end{equation*}
For the first factor, we apply \eqref{keyestimate} with $\mu=0$
\begin{equation*}
    \Vert u_{N_{1}}\Vert_{L^{\infty}_{t}L^{2}_{x}}\lesssim\Vert u_{N_{1}}\Vert_{X^{0,\frac{1}{2}+}_{\delta}}.
\end{equation*}
The remaining factors can be handled via Sobolev embedding and Strichartz estimate for biharmonic admissible pairs (i.e., \eqref{keyestimate} with $\mu=0$)
\begin{equation*}
    \Vert u_{N_{j}}\Vert_{L^{\infty}_{t}L^{4k-2}_{x}}\lesssim\Vert\langle\nabla\rangle^{\frac{2k-2}{2k-1}}u_{N_{j}}\Vert_{L^{\infty}_{t}L^{2}_{x}}\lesssim\Vert u_{N_{j}}\Vert_{X^{\frac{2k-2}{2k-1},\frac{1}{2}+}_{\delta}},\quad j=4,\dots,2k+2.
\end{equation*}
Combining all of the above estimates together with \eqref{eq:Berns_ineq}, we obtain
\begin{align*}
    \text{LHS of \eqref{eq:term1_dyad_pieces}}&\lesssim\frac{m(N_{1})}{m(N_{2})\dots m(N_{2k+2})}\frac{\langle N_{1}\rangle^{2}}{N_{2}N_{3}\prod_{j=2}^{2k+2}\langle N_{j}\rangle^{2}}\prod_{j=4}^{2k+2}\langle N_{j}\rangle^{\frac{2k-2}{2k-1}}\Vert u_{N_{1}}\Vert_{X^{-2,\frac{1}{2}+}_{\delta}}\prod_{j=2}^{2k+2}\Vert u_{N_{j}}\Vert_{X^{2,\frac{1}{2}+}_{\delta}}\\
    &\lesssim\frac{m(N_{1})}{m(N_{2})\dots m(N_{2k+2})}\frac{N_{1}^{2}}{N_{2}^{3}N_{3}^{3}\prod_{j=4}^{2k+2}N_{j}^{\frac{2k}{2k-1}}}\Vert u_{N_{1}}\Vert_{X^{-2,\frac{1}{2}+}_{\delta}}\prod_{j=2}^{2k+2}\Vert u_{N_{j}}\Vert_{X^{2,\frac{1}{2}+}_{\delta}}.
\end{align*}
As $m(x)x^{p}$ is increasing for $x\geq 0$ and $p\geq\frac{3}{4k}$ it suffices to show that
\begin{equation*}
    \frac{m(N_{1})N_{1}^{2}N^{3-}N_{2}^{0+}}{(m(N_{2}))^{2}N_{2}^{6}}\lesssim 1.
\end{equation*}
This can be seen since $m(N_{1})N_{1}^{2}\lesssim m(N_{2})N_{2}^{2}$ and $m(N_{2})N_{2}^{4-}\gtrsim N^{4-}$ implies that
\begin{equation*}
    \frac{m(N_{1})N_{1}^{2}N^{3-}N_{2}^{0+}}{(m(N_{2}))^{2}N_{2}^{6}}\lesssim N^{3-}N^{-4+}\lesssim 1.
\end{equation*}\\
\noindent
\textbf{Case 3: }$N_{2}\gg N_{3}\gtrsim N$.\\
\noindent
In this case, $N_{1}\sim N_{2}$. Multiplier can be estimated by
\begin{equation*}
    \Big|1-\frac{m_{2\dots 2k+2}}{m_{2}\dots m_{2k+2}}\Big|\lesssim\frac{m(N_{1})}{m(N_{2})\dots m(N_{2k+2})}.
\end{equation*}
Thanks to the non-negativity assumption of the Fourier transform of each piece, we are able now to turn back to the physical side in which we employ a series of Hölder, Strichartz \eqref{keyestimate}, and the Sobolev estimates as in Case 1 to get the following bound for the left side of \eqref{eq:term1_dyad_pieces}:\begin{align*}
    &\frac{m(N_1)}{m(N_{2})\dots m(N_{2k+2})}\Vert u_{N_{1}}\Vert_{L^{2}_{t}L^{\infty}_{x}}\Vert u_{N_{2}}\Vert_{L^{2}_{t}L^{\infty}_{x}}\prod_{j=3}^{2k+2}\Vert u_{N_{j}}\Vert_{L^{\infty}_{t}L^{2k}_{x}}\\&\lesssim \frac{m(N_1)}{m(N_{2})\dots m(N_{2k+2})} \frac{N_1^2}{N_1N_2^3}\frac{\prod_{j=3}^{2k+2}\langle N_j\rangle^{\frac{k-1}{k}}}{\prod_{j=3}^{2k+2}\langle N_j\rangle^2}\Vert u_{N_{1}}\Vert_{X^{-2,\frac{1}{2}+}_{\delta}}\prod_{j=2}^{2k+2}\Vert u_{N_{j}}\Vert_{X^{2,\frac{1}{2}+}_{\delta}}\\&\lesssim \frac{m(N_1)N_1^2\times(N_1N_2)^{-1}}{m(N_{2})N_2^2\,\,m(N_{3})N_3^{\frac{k+1}{k}}m(N_{4})N_{4}^{\frac{k+1}{k}}\dots m(N_{2k+2}) N_{2k+2}^{\frac{k+1}{k}}} \Vert u_{N_{1}}\Vert_{X^{-2,\frac{1}{2}+}_{\delta}}\prod_{j=2}^{2k+2}\Vert u_{N_{j}}\Vert_{X^{2,\frac{1}{2}+}_{\delta}} \\&\lesssim \frac{(N_1N_2)^{-1}}{N^{\frac{k+1}{k}}}\Vert u_{N_{1}}\Vert_{X^{-2,\frac{1}{2}+}_{\delta}}\prod_{j=2}^{2k+2}\Vert u_{N_{j}}\Vert_{X^{2,\frac{1}{2}+}_{\delta}}
\end{align*} 
where in the last line we use the fact that $N_1\sim N_2\Rightarrow m(N_1)N_1^2\sim m(N_2)N_2^2$ and that $m(x)x^p$ is increasing and bounded below for $s+p\geq 2$. Hence, it suffices to show that
\begin{equation*}
    \frac{N^{3-}N_{2}^{-2+}}{N^{\frac{k+1}{k}}}=N^{2-\frac{1}{k}-}N_{2}^{-2+}\lesssim 1
\end{equation*}
which indeed is true for all positive integers $k$. This completes estimating $Term_{1}$. 

\noindent For the second term, recall that
\begin{equation*}
    Term_{2}=\Big|\int_{0}^{\delta}\int_{\sum_{i=1}^{2k+2}\xi_{i}=0}\Big(1-\frac{m_{2\dots 2k+2}}{m_{2}\dots m_{2k+2}})\reallywidehat{\overline{\Delta Iu}}(\xi_{1})\dots\widehat{Iu}(\xi_{2k+2})\Big|.
\end{equation*}
Define
\begin{align*}
    \reallywidehat{u_{N_{1}}}(\xi_{1})&=\reallywidehat{\overline{P_{N_{1}}\Delta Iu}}(\xi_{1}),\\
    \reallywidehat{u_{N_{j}}}(\xi_{j})&=\reallywidehat{P_{N_{j}}Iu}(\xi_{j}),\quad j=2,4,6,\dots,2k+2,\\
    \reallywidehat{u_{N_{l}}}(\xi_{l})&=\reallywidehat{\overline{P_{N_{l}}Iu}}(\xi_{l}),\quad l=3,5,7,\dots,2k+1
\end{align*}
where $N_{i}=2^{k_{i}}$ for $k_{i}\in\mathbb{N}$, $i=1,\dots,2k+2$. Assuming the nonnegativity of the spatial Fourier transform of dyadic pieces, we can take the multiplier out of the integral with a pointwise bound in different frequency interaction cases. Also by symmetry, we may assume that $N_{2}\geq\dots\geq N_{2k+2}$ which implies $N_{1}\lesssim N_{2}$. In the end, we wish to show that
\begin{equation}\label{eq:term_2_dyad_pieces}
    \Big|\int_{0}^{\delta}\int_{\sum_{i=1}^{2k+2}}\reallywidehat{u_{N_1}}(\xi_{1})\dots\widehat{u_{N_{2k+2}}}(\xi_{2k+2})\Big|\lesssim N_{2}^{0-}N^{-3+}\Vert u_{N_{1}}\Vert_{X^{0,\frac{1}{2}+}_{\delta}}\prod_{j=2}^{2k+2}\Vert u_{N_{j}}\Vert_{X^{2,\frac{1}{2}+}_{\delta}}.
\end{equation}
To achieve the desired result, we need to cope with the following three frequency interaction cases:\\
\noindent
\textbf{Case 1: }$N_{1}\sim N_{2}\gtrsim N\gg N_{3}$.\\
\noindent
In this case, the bound on the multiplier is given by
\begin{equation*}
    \Big|1-\frac{m_{2\dots 2k+2}}{m_{2}\dots m_{2k+2}}\Big|\lesssim\frac{N_{3}}{N_{2}}.
\end{equation*}
Utilizing the above bound together with Hölder's inequality after undoing the Plancherel formula, we have
\begin{equation*}
    \text{LHS of \eqref{eq:term_2_dyad_pieces}}\lesssim\frac{N_{3}}{N_{2}}\Vert u_{N_{1}}\Vert_{L^{2}_{t}L^{\infty}_{x}}\Vert u_{N_{2}}\Vert_{L^{2}_{t}L^{\infty}_{x}}\prod_{j=3}^{2k+2}\Vert u_{N_{j}}\Vert_{L^{\infty}_{t}L^{2k}_{x}}.
\end{equation*}
Using the inequality \eqref{keyestimate} with $\mu=1$ for the first two factors, we get
\begin{equation*}
    \Vert u_{N_{i}}\Vert_{L^{2}_{t}L^{\infty}_{x}}\lesssim N_{i}^{-1}\Vert u_{N_{i}}\Vert_{X^{0,\frac{1}{2}+}_{\delta}},\quad i=1,2.
\end{equation*}
For the remaining factors, we first apply Sobolev embedding and then \eqref{keyestimate} with $\mu=0$ to obtain
\begin{equation*}
    \Vert u_{N_{j}}\Vert_{L^{\infty}_{t}L^{2k}_{x}}\lesssim\Vert\langle\nabla\rangle^{\frac{k-1}{k}} u_{N_{j}}\Vert_{L^{\infty}_{t}L^{2}_{x}}\lesssim N^{\frac{k-1}{k}}_{j}\Vert u_{N_{j}}\Vert_{X^{0,\frac{1}{2}+}_{\delta}},\quad j=3,\dots,2k+2.
\end{equation*}
Combining the above two bounds, we get
\begin{equation*}
    \text{LHS of \eqref{eq:term_2_dyad_pieces}}\lesssim\frac{N_{3}}{N_{1}N_{2}^{2}}\frac{1}{N_{2}^{2}N_{3}^{\frac{k+1}{k}}\dots N_{2k+2}^{\frac{k+1}{k}}}\Vert u_{N_{1}}\Vert_{X^{0,\frac{1}{2}+}_{\delta}}\prod_{j=2}^{2k+2}\Vert u_{N_{j}}\Vert_{X^{2,\frac{1}{2}+}_{\delta}}.
\end{equation*}
Then it is enough to show that
\begin{equation*}
    N^{3-}N_{2}^{-5+}\lesssim 1
\end{equation*}
which is true as $N_{2}\gtrsim N$.\\
\noindent
\textbf{Case 2: }$N_{2}\sim N_{3}\gtrsim N$.\\
\noindent
In this case, the bound for the multiplier is calculated by
\begin{equation*}
    \Big|1-\frac{m_{2\dots 2k+2}}{m_{2}\dots m_{2k+2}}\Big|\lesssim\frac{m(N_{1})}{m(N_{2})m(N_{3})\dots m(N_{2k+2})}.
\end{equation*}
Applying this bound with Hölder's inequality by means of reverting Plancherel's formula back to physical space in the spatial variables, we get
\begin{equation*}
    \text{LHS of \eqref{eq:term_2_dyad_pieces}}\lesssim\frac{m(N_{1})}{m(N_{2})\dots m(N_{2k+2})}\Vert u_{N_{1}}\Vert_{L^{\infty}_{t}L^{2k}_{x}}\Vert u_{N_{2}}\Vert_{L^{2}_{t}L^{\infty}_{x}}\Vert u_{N_{3}}\Vert_{L^{2}_{t}L^{\infty}_{x}}\prod_{j=4}^{2k+2}\Vert u_{N_{j}}\Vert_{L^{\infty}_{t}L^{2k}_{x}}.
\end{equation*}
For the first factor and the factors in the product, we apply Sobolev embedding and \eqref{keyestimate} with $\mu=0$ to obtain
\begin{equation*}
    \Vert u_{N_{i}}\Vert_{L^{\infty}_{t}L^{2k}_{x}}\lesssim\Vert\langle\nabla\rangle^{\frac{k-1}{k}} u_{N_{i}}\Vert_{L^{\infty}_{t}L^{2}_{x}}\lesssim N_{i}^{\frac{k-1}{k}}\Vert u_{N_{i}}\Vert_{X^{0,\frac{1}{2}+}_{\delta}},\quad i=1,4,5,\dots,2k+2.
\end{equation*}
For the second and the third factors, we implement \eqref{keyestimate} with $\mu=1$ to have
\begin{equation*}
    \Vert u_{N_{i}}\Vert_{L^{2}_{t}L^{\infty}_{x}}\lesssim N_{i}^{-1}\Vert u_{N_{i}}\Vert_{X^{0,\frac{1}{2}+}_{\delta}}.
\end{equation*}
Therefore, we have
\begin{equation*}
    \text{LHS of \eqref{eq:term_2_dyad_pieces}}\lesssim\frac{m(N_{1})}{m(N_{2})\dots m(N_{2k+2})}\frac{N_{1}^{\frac{k-1}{k}}}{N_{2}^{3}N_{3}^{3}\prod_{j=4}^{2k+2}N_{j}^{\frac{k+1}{k}}}\Vert u_{N_{1}}\Vert_{X^{0,\frac{1}{2}+}_{\delta}}\prod_{j=2}^{2k+2}\Vert u_{N_{i}}\Vert_{X^{2,\frac{1}{2}+}_{\delta}}.
\end{equation*}
Since $m(x)x^{\frac{k+1}{k}}$ is increasing for $0\leq x$ and $0\leq m\leq 1$, it is enough to verify that
\begin{equation*}
    \frac{m(N_{1})N_{1}^{\frac{k-1}{k}}N_{2}^{0+}N^{3-}}{m(N_{2})^{2}N_{2}^{6}}\lesssim 1.
\end{equation*}
 Then using $m(N_{1})N_{1}^{\frac{k-1}{k}}\lesssim m(N_{2})N_{2}^{\frac{k-1}{k}}$ and $m(N_{2})N_{2}^{\frac{k+1}{k}}\gtrsim N^{\frac{k+1}{k}}$, the above inequality boils down to
\begin{equation*}
    N^{1-\frac{2}{k}-}N_{2}^{-3+\frac{1}{k}+}\lesssim 1
\end{equation*}
which holds by our restrictions on the frequencies.
\\ \noindent
\textbf{Case 3: }$N_{1}\sim N_{2}\gg N_{3}\gtrsim N$.\\
\noindent
In this case, the bound on the multiplier is
\begin{equation*}
    \Big|1-\frac{m_{2\dots2k+2}}{m_{2}\dots m_{2k+2}}\Big|\lesssim\frac{m(N_{1})}{m(N_{2})m(N_{3})\dots m(N_{2k+2})}\lesssim\frac{1}{m(N_{3})\dots m(N_{2k+2})}.
\end{equation*}
Applying the bound for the multiplier, reverting back Plancherel formula, and exploiting Hölder's inequality, we get
\begin{equation*}
    \text{LHS of \eqref{eq:term_2_dyad_pieces}}\lesssim \frac{1}{m(N_{3})\dots m(N_{2k+2})}\Vert u_{N_{1}}\Vert_{L^{2}_{t}L^{\infty}_{x}}\Vert u_{N_{2}}\Vert_{L^{2}_{t}L^{\infty}_{x}}\prod_{j=3}^{2k+2}\Vert u_{N_{j}}\Vert_{L^{\infty}_{t}L^{2k}_{x}}.
\end{equation*}
Continuing similarly as in Case 2, we obtain
\begin{equation*}
    \text{LHS of \eqref{eq:term_2_dyad_pieces}}\lesssim\frac{1}{m(N_{3})\dots m(N_{2k+2})}\frac{1}{N_{1}N_{2}}\Vert u_{N_{1}}\Vert_{X^{0,\frac{1}{2}+}_{\delta}}\Vert u_{N_{2}}\Vert_{X^{0,\frac{1}{2}+}_{\delta}}\prod_{j=3}^{2k+2}N_{j}^{\frac{k-1}{k}}\Vert u_{N_{j}}\Vert_{X^{0,\frac{1}{2}+}_{\delta}}.
\end{equation*}
Then it suffices to show
\begin{equation*}
    \frac{N^{3-}N_{2}^{0+}}{N_{2}^{4}\prod_{j=3}^{2k+2}N^{\frac{k+1}{k}}_{j}m(N_{j})}\lesssim 1.
\end{equation*}
As $m(x)x^{\frac{k+1}{k}}$ is an increasing function, we replace $m(N_{3})N_{3}^{\frac{k+1}{k}}$ with $m(N)N^{\frac{k+1}{k}}$, and $m(N_{i})N_{i}^{\frac{k+1}{k}}$ with $1$ for $i=4,\dots,2k+2$. Then the above bound reduces to 
\begin{equation*}
    N^{3-\frac{k+1}{k}-}N_{2}^{-4+}\lesssim 1
\end{equation*}
which holds as $N_{2}\gtrsim N$. \\ It remains to estimate $Term_{3}$. Recall that
\begin{equation*}
    Term_{3}=\Big|\int_{0}^{\delta}\int_{\sum_{i=1}^{2k+2}\xi=0}\Big(1-\frac{m_{2\dots2k+2}}{m_{2}\dots m_{2k+2}}\Big)\reallywidehat{\overline{I(\vert u\vert^{2k}u)}}(\xi_{1})\dots\widehat{Iu}(\xi_{2k+2})\Big|.
\end{equation*}
Revealing the convolution structure of $\reallywidehat{\overline{I(\vert u\vert^{2k}u)}}$, relabelling the frequency variables, and defining
\begin{align*}
    \reallywidehat{u_{N_{j}}}(\xi_{j})&=\reallywidehat{P_{N_{j}}u}(\xi_{j}),\quad j=2k+2,2k+4,\dots,4k+2,\\
    \reallywidehat{u_{N_{l}}}(\xi_{l})&=\reallywidehat{\overline{P_{N_{l}}u}}(\xi_{l}),\quad l=2k+3,2k+5,\dots,4k+1,
\end{align*}
we can bound $Term_{3}$ by
\begin{multline*}
    Term_{3}\leq\sum_{N_{2k+2}\geq\dots\geq N_{4k+2}}\Big|\int_{0}^{\delta}\int_{\sum_{i=1}^{4k+2}\xi_{i}=0}\Big(1-\frac{m_{2k+2\dots 4k+2}}{m_{2k+2}\dots m_{4k+2}}\Big)\reallywidehat{P_{N_{1\dots2k+1}}\overline{I(\vert u\vert^{2k}u)}}(\xi_{1\dots2k+1})\\\times\widehat{Iu_{N_{2}}}(\xi_{2})\dots\widehat{Iu_{N_{4k+2}}}(\xi_{4k+2})\Big|
\end{multline*}
where $P_{N_{1\dots2k+1}}$ is the Littlewood-Paley projection operator onto the dyadic shell $N_{1\dots2k+1}\sim\langle\xi_{1\dots2k+1}\rangle$. Assuming the spatial Fourier transform of dyadic pieces is nonnegative, we take the multiplier out of the integral with the pointwise bound
\begin{equation*}
    \Big|1-\frac{m_{2k+2\dots4k+2}}{m_{2k+2}\dots m_{4k+2}}\Big|\lesssim\frac{m(N_{1\dots2k+1})}{m(N_{2k+2})\dots m(N_{4k+2})}
\end{equation*}
for each frequency interaction case. As before, by symmetry, we may assume that $N_{2k+2}\geq\dots\geq N_{4k+2}$. Then we can infer that $N_{1\dots2k+1}\lesssim N_{2k+2}$. So we need to show that
\begin{multline}\label{eq:term3_dyad_p_wish}
    \Big|\int_{0}^{\delta}\int_{\sum_{i=1}^{4k+2}\xi_{i}=0}\Big(1-\frac{m_{2k+2\dots 4k+2}}{m_{2k+2}\dots m_{4k+2}}\Big)\reallywidehat{P_{N_{1\dots2k+1}}\overline{I(\vert u\vert^{2k}u)}}(\xi_{1\dots2k+1})\\\times\widehat{Iu_{N_{2}}}(\xi_{2})\dots\widehat{Iu_{N_{4k+2}}}(\xi_{4k+2})\Big|\lesssim N_{2k+2}^{0-}N^{-4+}\Vert Iu\Vert_{X^{2,\frac{1}{2}+}_{\delta}}^{2k+1}\prod_{j=2k+2}^{4k+2}\Vert Iu_{N_{j}}\Vert_{X^{2,\frac{1}{2}+}_{\delta}}.
\end{multline}
The factor $N^{0-}_{2k+2}$ on the right side of the inequality allows us to sum all the dyadic pieces over the dyadic numbers $N_{1\dots2k+1},\dots,N_{4k+2}$ and we do not have to deal with the dyadic pieces of separate terms of $I(\vert u\vert^{2k}u)$. Therefore, using the bound on the multiplier and utilizing Hölder's inequality after the Fourier inversion, we arrive at
\begin{multline}\label{eq:term3_dyad_last}
    \text{LHS of \eqref{eq:term3_dyad_p_wish}}\lesssim\frac{m(N_{1\dots2k+1})}{m(N_{2k+2})\dots m(N_{4k+2})}\Vert P_{N_{1\dots2k+1}}I(\vert u\vert^{2k}u)\Vert_{L^{2}_{t,x}}\Vert Iu_{N_{2k+2}}\Vert_{L^{6}_{t,x}}\Vert Iu_{N_{2k+3}}\Vert_{L^{6}_{t,x}}\\\times\Vert Iu_{N_{2k+4}}\Vert_{L^{6}_{t,x}}\prod_{j=2k+5}^{4k+2}\Vert u_{N_{j}}\Vert_{L^{\infty}_{t,x}}
\end{multline}
To continue with estimating RHS of \eqref{eq:term3_dyad_last}, we require the following estimates:
\begin{lemma}\label{yardımcı}
    Let $u_{N_{2k+2}},\dots,u_{N_{4k+2}}$ be defined as above. Then
    \begin{align}
        \Vert P_{N_{1\dots2k+1}}I(\vert u\vert^{2k}u)\Vert_{L^{2}_{t,x}}&\lesssim\langle N_{1\dots2k+1}\rangle^{-2}\Vert Iu\Vert_{X^{2,\frac{1}{2}+}_{\delta}}^{2k+1}\label{eq:eq1},\\
        \Vert Iu_{N_{j}}\Vert_{L^{6}_{t,x}}&\lesssim\langle N_{j}\rangle^{-2}\Vert Iu\Vert_{X^{2,\frac{1}{2}+}_{\delta}},\quad j=2k+2,2k+3,2k+4,\label{eq:eq2}\\
        \Vert Iu_{N_{j}}\Vert_{L^{\infty}_{t,x}}&\lesssim\langle N_{j}\rangle^{-1}\Vert Iu_{N_{j}}\Vert_{X^{2,\frac{1}{2}+}_{\delta}},\quad j=2k+5,\dots,4k+2.\label{eq:eq3}
    \end{align}
\end{lemma}
\begin{proof}
    To show \eqref{eq:eq1}, by Bernstein's inequality \eqref{eq:Berns_ineq} and $L^{2}\to L^{2}$ boundedness of $P_{N_{1\dots2k+1}}$, it suffices to demonstrate that
    \begin{equation*}
         \Vert\langle\nabla\rangle^{2} P_{N_{1\dots2k+1}}I(\vert u\vert^{2k}u)\Vert_{L^{2}_{t,x}}\lesssim\Vert Iu\Vert_{X^{2,\frac{1}{2}+}_{\delta}}^{2k+1}.
    \end{equation*}
    The pseudo-differential operator $\langle\nabla\rangle^{2}I$ is of positive order $s$. So, it obeys the fractional Leibniz rule. Applying Hölder's inequality, Sobolev embedding and \eqref{keyestimate} with $\mu=0$, we get
    \begin{align*}
         \Vert\langle\nabla\rangle^{2} P_{N_{1\dots2k+1}}I(\vert u\vert^{2k}u)\Vert_{L^{2}_{t,x}}\lesssim\Vert\langle\nabla\rangle^{2}Iu\Vert_{L^{6}_{t,x}}\Vert Iu\Vert_{L^{6k}_{t,x}}^{2k}&\lesssim\Vert Iu\Vert_{X^{2,\frac{1}{2}+}_{\delta}}\Vert\langle\nabla\rangle^{\frac{k-1}{k}}u\Vert_{L^{6k}_{t}L^{\frac{6k}{3k-2}}_{x}}^{2k}\\
         &\lesssim\Vert Iu\Vert_{X^{2,\frac{1}{2}+}_{\delta}}\Vert\langle\nabla\rangle^{\frac{k-1}{k}} u\Vert_{X^{0,\frac{1}{2}+}_{\delta}}^{2k}\\
         &\lesssim\Vert Iu\Vert_{X^{2,\frac{1}{2}+}_{\delta}}^{2k+1}
    \end{align*}
    provided that $s\geq\frac{k-1}{k}$. \eqref{eq:eq2} follows from the inequality \eqref{keyestimate} with $\mu=0$ and Bernstein's inequality \eqref{eq:Berns_ineq}
    \begin{equation*}
         \Vert Iu_{N_{j}}\Vert_{L^{6}_{t,x}}\lesssim \Vert Iu_{N_{j}}\Vert_{X^{0,\frac{1}{2}+}_{\delta}}\sim\langle N_{j}\rangle^{-2} \Vert Iu_{N_{j}}\Vert_{X^{2,\frac{1}{2}+}_{\delta}},\quad j=2k+2,2k+3,2k+4.
    \end{equation*}
    To establish \eqref{eq:eq3}, we utilize the Fourier inversion formula and Plancherel's theorem:
    \begin{align*}
        \vert Iu_{N_{j}}(x)&\vert\leq\int_{\vert\xi\vert\sim N_{j}}\vert \widehat{Iu_{N_{j}}}(\xi)\vert d\xi\\&
        \leq\Big(\int_{\vert\xi\vert\sim N_{j}}\langle\xi\rangle^{-4}d\xi\Big)^{1/2}\Big(\int_{\vert\xi\vert\sim N_{j}}\langle\xi\rangle^{4}\vert\widehat{Iu_{N_{j}}}(\xi)\vert^{2} d\xi\Big)^{1/2}\\
        &\lesssim \langle N_{j}\rangle^{-1}\Vert Iu_{N_{j}}\Vert_{H^{2}_{x}}.
    \end{align*}
    Taking the supremum over $x$ and using the embedding $X^{2,\frac{1}{2}+}_{\delta}\hookrightarrow C^{0}_{t}H^{2}_{x}$ lead to the desired result.
\end{proof}
\noindent Using the above Lemma \ref{yardımcı}, we get
\begin{multline*}
    \text{RHS of \eqref{eq:term3_dyad_last}}\lesssim\frac{m(N_{1\dots2k+1})}{m(N_{2k+2})\dots m(N_{4k+2})}\langle N_{2k+2}\rangle^{-2}\langle N_{2k+3}\rangle^{-2}\langle N_{2k+4}\rangle^{-2}\prod_{j=2k+5}^{4k+2}\langle N_{j}\rangle^{-1}\\\times\Vert Iu\Vert_{X^{2,\frac{1}{2}+}_{\delta}}^{2k+1}\prod_{j=2k+2}^{4k+2}\Vert Iu_{N_{j}}\Vert_{X^{2,\frac{1}{2}+}_{\delta}}.
\end{multline*}
Thus, it suffices to exhibit that
\begin{equation}\label{eq:term3_wish_to_show}
    \frac{m(N_{1\dots2k+1})}{m(N_{2k+2})\dots m(N_{4k+2})}\frac{\prod_{j=2k+5}^{4k+2}N_{j}^{-1}}{\langle N_{1\dots2k+1}\rangle^{2}N_{2k+2}^{2}N_{2k+3}^{2}N_{2k+4}^{2}}\lesssim N^{-4+}N_{2k+2}^{0-}
\end{equation}
for each frequency interaction case.\\
\noindent
\textbf{Case 1: }$N_{1\dots2k+1}\sim N_{2k+2}\gtrsim N\gg N_{2k+3}$.\\
\noindent
In this case, we have
\begin{equation*}
    \text{LHS of \eqref{eq:term3_wish_to_show}}\lesssim N_{2k+2}^{-4}.
\end{equation*}
As $N_{2k+2}\gtrsim N$, \eqref{eq:term3_wish_to_show} holds at once.\\
\noindent
\textbf{Case 2: }$N_{2k+2}\sim N_{2k+3}\gtrsim N$.\\
\noindent
In this case, we have
\begin{align*}
\text{LHS of \eqref{eq:term3_wish_to_show}}&\lesssim\frac{m(N_{1\dots2k+1})}{m(N_{2k+2})^{2}N_{2k+2}^{4}\langle N_{1\dots2k+1}\rangle^{2}m(N_{2k+4})N_{2k+4}^{2}\prod_{j=2k+5}^{4k+2}m(N_{j})N_{j}}\\
&\lesssim\frac{1}{m(N_{2k+2})^{2}N_{2k+2}^{4}}\\
&\lesssim\frac{N_{2k+2}^{0-}}{N^{4-}}
\end{align*}
where we have used the facts that $m(N_{1\dots2k+1})\langle N_{1\dots2k+1}\rangle^{-2}\lesssim 1$ and $m(x)x^{p}$, $x\geq 0$, is increasing provided that $p>\frac{3}{4k}$. 
\\
\noindent
\textbf{Case 3: }$N_{1\dots2k+1}\sim N_{2k+2}\gg N_{2k+3}\gtrsim N$.\\
\noindent
In this case
\begin{align*}
    \text{LHS of \eqref{eq:term3_wish_to_show}}&\lesssim\frac{m(N_{1\dots2k+1})}{m(N_{2k+2})\dots m(N_{4k+2})}\frac{\prod_{j=2k+5}^{4k+2}N_{j}^{-1}}{\langle N_{1\dots2k+1}\rangle^{2}N_{2k+2}^{2}N_{2k+3}^{2}N_{2k+4}^{2}}\\
    &\lesssim\frac{1}{N_{2k+2}^{4}m(N_{2k+3})N_{2k+3}^{2}\prod_{j=2k+5}^{4k+2}N_{j}m(N_{j})}\\
    &\lesssim N^{-2}N_{2k+2}^{-4}
\end{align*}
where we have exploited the facts that $m(N_{2k+3})N_{2k+3}^{2}\gtrsim N^{2}$, $m(N_{j})N_{j}\gtrsim 1$, $j=2k+5,\dots,4k+2$. Therefore, as $N_{2k+2}\gtrsim N$
\begin{equation}
    N^{-2}N_{2k+2}^{-4}\lesssim N^{-4+}N_{2k+2}^{0-}.
\end{equation}
Consequently, the proof of the growth of almost conserved quantity is completed. 
\end{proof}
\section{Proof of the Main Theorem}
In this section, we shall prove Theorem \ref{eq:main_thm} by making use of Proposition \ref{almost_cons_law}. By almost conservation law and the modified local existence, the growth of the modified energy can be bounded by
\begin{align}
    E(Iu(\delta))-E(Iu(0))&\lesssim N^{-3+}\Vert Iu\Vert_{X^{2,\frac{1}{2}+}_{\delta}}^{2k+2}+N^{-4+}\Vert Iu\Vert_{X^{2,\frac{1}{2}+}_{\delta}}^{4k+2}\nonumber\\
    &\lesssim N^{-3+}\Vert Iu_{0}\Vert_{H^{2}_{x}}^{2k+2}+N^{-4+}\Vert Iu_{0}\Vert_{H^{2}_{x}}^{4k+2}\nonumber\\
    &\lesssim N^{-3+}N^{(2k+2)(2-s)}\Vert u_{0}\Vert_{H^{s}_{x}}^{2k+2}+N^{-4+}N^{(4k+2)(2-s)}\Vert u_{0}\Vert_{H^{s}_{x}}^{4k+2}\nonumber\\
    &\lesssim N^{-3+}N^{(2k+2)(2-s)}+N^{-4+}N^{(4k+2)(2-s)}\label{process}.
\end{align}
Moreover, to determine the initial modified energy in terms of the frequency cut-off parameter $N$, we need the following estimate for the $L^{2k+2}$-norm piece of the almost conserved quantity.
\begin{lemma}
    If $u_{0}\in H^{s}_{x}(\mathbb{R}^{2})$ for $2>s\geq\frac{k}{k+1}$, then
    \begin{equation}\label{est.}
    \norm{Iu_0}_{L^{2k+2}_x}\leq CN^\frac{2-s}{k+1}\norm{u_0}_{H^s_x}.
    \end{equation}
\end{lemma}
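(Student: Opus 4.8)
The plan is to combine the sharp Sobolev embedding with a pointwise bound on the multiplier $m$. Set $\sigma=\frac{k}{k+1}$; this is exactly the critical exponent for the two-dimensional Sobolev embedding $\dot H^{\sigma}(\mathbb R^2)\hookrightarrow L^{2k+2}(\mathbb R^2)$, since $2\big(\tfrac12-\tfrac{1}{2k+2}\big)=\tfrac{k}{k+1}$. Consequently $\norm{Iu_0}_{L^{2k+2}_x}\lesssim\norm{Iu_0}_{H^{\sigma}_x}$, and by Plancherel the lemma reduces to the pointwise symbol estimate
\begin{equation*}
\langle\xi\rangle^{\sigma}m(\xi)\lesssim N^{\frac{2-s}{k+1}}\langle\xi\rangle^{s}\qquad\text{for all }\xi\in\mathbb R^2,
\end{equation*}
from which $\norm{Iu_0}_{H^{\sigma}_x}=\norm{\langle\xi\rangle^{\sigma}m(\xi)\widehat{u_0}}_{L^2_\xi}\lesssim N^{\frac{2-s}{k+1}}\norm{u_0}_{H^s_x}$ follows immediately.

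I would verify this symbol bound by splitting into the frequency regions prescribed by the definition \eqref{eq:defn-of-m}. On $\{|\xi|\leq N\}$ one has $m(\xi)=1$, so the claim reads $\langle\xi\rangle^{\sigma-s}\lesssim N^{\frac{2-s}{k+1}}$; since the hypothesis $s\geq\frac{k}{k+1}=\sigma$ gives $\sigma-s\leq 0$, the left side is at most $1$, while the right side is $\geq 1$ because $N\gg 1$ and $s<2$. On $\{|\xi|>N\}$, using that $m$ is non-increasing together with \eqref{eq:defn-of-m} one has $m(\xi)\lesssim N^{2-s}\langle\xi\rangle^{s-2}$ (the two branches of $m$ match up to absolute constants across the transition annulus $N<|\xi|\leq 2N$, where both sides are comparable to $1$ and $\langle\xi\rangle\sim|\xi|$). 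Hence $\langle\xi\rangle^{\sigma}m(\xi)\lesssim N^{2-s}\langle\xi\rangle^{\sigma+s-2}$, and after absorbing the $N$ powers the target inequality becomes $N^{\frac{(2-s)k}{k+1}}\lesssim\langle\xi\rangle^{2-\sigma}$.

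Because $2-\sigma=\frac{k+2}{k+1}>0$ and $|\xi|>N\geq 1$, we have $\langle\xi\rangle^{2-\sigma}\gtrsim N^{2-\sigma}$, so it suffices to check the exponent inequality $(2-s)\frac{k}{k+1}\leq 2-\sigma=\frac{k+2}{k+1}$, which upon clearing denominators is precisely $k(1-s)\leq 2$. This is the one place where the hypothesis enters decisively: from $s\geq\frac{k}{k+1}$ we get $1-s\leq\frac{1}{k+1}$, whence $k(1-s)\leq\frac{k}{k+1}<1\leq 2$. This closes the high-frequency case and completes the argument. The main obstacle is purely bookkeeping in the high-frequency regime—correctly writing $m(\xi)\lesssim N^{2-s}\langle\xi\rangle^{s-2}$ across the transition annulus and tracking the $N$ exponents—after which the whole statement collapses to the elementary inequality $k(1-s)\leq 2$ guaranteed by $s\geq\frac{k}{k+1}$.
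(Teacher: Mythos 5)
Your proof is correct, and it takes a genuinely different route from the paper's. The paper decomposes the function itself, $u_0=P_{\leq N}u_0+P_{>N}u_0$: the low-frequency piece is handled by the Sobolev embedding $H^{k/(k+1)}(\mathbb{R}^2)\hookrightarrow L^{2k+2}(\mathbb{R}^2)$ alone (since $I$ acts as the identity there), while the high-frequency piece is run through the same embedding, Bernstein's inequality \eqref{eq:Berns_ineq2}, and the smoothing bound $\norm{IP_{>N}u_0}_{H^2}\lesssim N^{2-s}\norm{P_{>N}u_0}_{H^s}$, ending with the exponent comparison $N^{\frac{k}{k+1}-s}\leq N^{\frac{2-s}{k+1}}$, valid for $s\geq\frac{k-2}{k}$. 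You instead apply the critical embedding once to $Iu_0$ at regularity $\sigma=\frac{k}{k+1}$ and reduce the entire lemma, via Plancherel, to the single pointwise symbol bound $\langle\xi\rangle^{\sigma}m(\xi)\lesssim N^{\frac{2-s}{k+1}}\langle\xi\rangle^{s}$, which you verify by splitting the frequency plane at $|\xi|\sim N$ according to the definition \eqref{eq:defn-of-m}; your treatment of the transition annulus $N<|\xi|\leq 2N$ (monotonicity of $m$ plus $N^{2-s}\langle\xi\rangle^{s-2}\sim 1$ there) is the one point needing care, and you handle it correctly. The two arguments are morally parallel — both split at frequency $N$, both invoke the same Sobolev embedding, and both bottom out in the same elementary inequality, since the paper's closing condition $s\geq\frac{k-2}{k}$ is exactly your $k(1-s)\leq 2$ after clearing denominators — but yours is leaner: it needs no Bernstein inequality and no detour through the $H^2$ norm, as everything is absorbed into one multiplier estimate. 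What the paper's version buys in exchange is that it makes explicit the heuristic used throughout the $I$-method sections (``$I$ is the identity at low frequency and smooths by order $2-s$ at high frequency''), whereas your version records the same facts more sharply at the symbol level.
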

\begin{proof}
 We set
 \begin{align*}
 u_0=P_{\leq N}u_0 + P_{> N}u_0 .    
 \end{align*}Showing \eqref{est.} for the 
 high-frequency part of initial data suffices, as for the low-frequency part the $I$-operator acts as identity. Indeed, the Sobolev embedding $H^s(\mathbb{R}^2)\hookrightarrow L^{2k+2}(\mathbb{R}^2)$ for $s\geq \frac{k}{k+1}$ implies that
 \begin{align}\label{first}
     \norm{IP_{\leq N}u_0}_{L^{2k+2}}\leq C\norm{u_0}_{H^s} 
 \end{align}
for some positive real constant $C$. For the high frequency part, again by the embedding $H^{\frac{k}{k+1}}(\mathbb{R}^2)\hookrightarrow L^{2k+2}(\mathbb{R}^2)$ and Bernstein's inequality \eqref{eq:Berns_ineq2} together with the fact that $I$-operator is smoothing of order $2-s$, we obtain 
 \begin{align}
  \norm{IP_{> N}u_0}_{L^{2k+2}}&\lesssim \norm{|\nabla|^{\frac{k}{k+1}}IP_{\geq N}u_0}_{L^2(\mathbb{R}^2)}  \lesssim N^{\frac{k}{k+1}-2}\norm{|\nabla|^{2}IP_{\geq N}u_0}_{L^2(\mathbb{R}^2)}\nonumber
  \\&\lesssim N^{\frac{k}{k+1}-2}\norm{IP_{\geq N}u_0}_{H^2(\mathbb{R}^2)}\nonumber\lesssim N^{\frac{k}{k+1}-2}N^{2-s}\norm{P_{\geq N}u_0}_{H^s(\mathbb{R}^2)}\nonumber
  \\&\lesssim N^{\frac{k}{k+1}-s}\norm{u_0}_{H^s(\mathbb{R}^2)}\leq  N^{\frac{2-s}{k+1}}\norm{u_0}_{H^s(\mathbb{R}^2)} \label{second}
 \end{align}
 where the last inequality is valid for the exponents for $s\geq\frac{k-2}{k}$ but this range is already subsumed by the assumption. Thus the inequality \eqref{est.} follows from \eqref{first} and \eqref{second} since $N\gg1$ and $s<2$.
\end{proof}
\noindent Since $u_0\in H^s$ for $s\geq1$, using \eqref{est.} along with the fact that $I$-operator is smoothing of order $2-s$, for $1\leq s<2$, we arrive at:
\begin{align*}
    E(Iu(0))&\leq C_0\Vert Iu_{0}\Vert_{H^{2}_{x}}^{2}+C_0\Vert Iu_{0}\Vert_{H^{1}_{x}}^{2}+C_0\Vert Iu_{0}\Vert_{L^{2k+2}_{x}}^{2k+2}\\
    &\leq C_1N^{2(2-s)}\Vert u_{0}\Vert_{H^{s}_{x}}^{2}+C_1N^{2(2-s)}\norm{u_0}_{H^1_x}^2+C_1N^{2(2-s)}\Vert u_{0}\Vert_{H^{s}_{x}}^{2k+2}\\
    &\leq CN^{2(2-s)}\big(\Vert u_{0}\Vert_{H^{s}_{x}}^{2}+\Vert u_{0}\Vert_{H^{s}_{x}}^{2k+2}\big)\lesssim N^{2(2-s)}.
\end{align*}
Let $T\gg 1$ be a large time parameter. From Lemma \ref{mod_local_exst} along with the smoothing behavior of $I$-operator, we set $\delta^{1-}\sim N^{-2k(2-s)}$. If we iterate the process \eqref{process} $T\delta^{-1}$ times until we reach $E(Iu(T))\sim N^{2(2-s)}$, then we require that the following relation should hold
\begin{equation}\label{eq:T_delta_N_relation}
    T\delta^{-1}\Big(N^{-3+}N^{(2k+2)(2-s)}+N^{-4+}N^{(4k+2)(2-s)}\Big)\sim N^{2(2-s)}.
\end{equation}
This occurs provided that we have
\begin{equation*}
    T\sim N^{4ks-8k+3-}+N^{6ks-12k+4-}.
\end{equation*}
The exponents of $N$ are positive only when
\begin{itemize}
    \item $4ks-8k+3->0\Rightarrow s>2-\frac{3}{4k},$
    \item $6ks-12k+4->0\Rightarrow s>2-\frac{2}{3k}.$
\end{itemize}
 Finally, we need to establish the polynomial-in-time bound for the $H^{s}_{x}$-norm of $u$. By the relation between $N$ and $T$, for $T\gg 1$, we have
\begin{equation}
    \sup_{t\in[0,T]}\Vert u(t)\Vert_{H^{s}_{x}}\lesssim E(I(u(T))\lesssim N^{2(2-s)}\sim
    \begin{cases}  
     T^{\frac{4-2s}{4ks-8k+3}-}&\text{if $2-\frac{3}{4k}<s\leq 2-\frac{1}{2k}$}\\
     T^{\frac{4-2s}{6ks-12k+4}-}&\text{if $2-\frac{1}{2k}\leq s<2$}.
     \end{cases}
\end{equation}

\section*{Acknowledgement}
The third author would like to thank his Ph.D advisor T. Burak G\"{u}rel for many helpful suggestions and comments during the preparation of this manuscript.

\nocite{*}
\bibliographystyle{abbrv}
\bibliography{reference.bib}

\end{document}